\documentclass[10pt,a4paper]{amsart}
\setlength{\topmargin}{8mm} \setlength{\headheight}{0mm}
\setlength{\headsep}{0mm} \setlength{\footskip}{15mm}
\setlength{\textheight}{236mm} \setlength{\oddsidemargin}{4.6mm}
\setlength{\evensidemargin}{4.6mm} \setlength{\marginparsep}{0mm}
\setlength{\marginparwidth}{0mm} \setlength{\textwidth}{155mm}

\usepackage{amscd,amssymb,amsopn,amsmath,amsthm,graphics,amsfonts,enumerate,verbatim,calc}
\usepackage[dvips]{graphicx}

\usepackage{mathpazo}
\usepackage{color}
\usepackage{latexsym}
\usepackage{amsthm,amsfonts,amssymb,mathrsfs}
\usepackage{rotating}
\usepackage[leqno]{amsmath}
\usepackage{xspace}
\usepackage[all]{xy}
\usepackage{longtable}
\textwidth=16.cm \textheight=23 cm \topmargin=0.00cm
\oddsidemargin=0.00cm \evensidemargin=0.00cm \headheight=20.9pt
\headsep=1cm \numberwithin{equation}{section}
\hyphenation{semi-stable} \emergencystretch=10pt

\newtheorem{theorem}{Theorem}[section]
\newtheorem{lemma}[theorem]{Lemma}
\newtheorem{notation}[theorem]{Notation}
\newtheorem{proposition}[theorem]{Proposition}
\newtheorem{corollary}[theorem]{Corollary}

\theoremstyle{definition}
\newtheorem{definition}[theorem]{Definition}
\theoremstyle{remark}
\newtheorem{remark}[theorem]{Remark}
\newtheorem{fact}[theorem]{Fact}
\newtheorem{example}[theorem]{Example}
\newtheorem{observation}[theorem]{Observation}
\newtheorem{discussion}[theorem]{Discussion}
\newtheorem{question}[theorem]{Question}

\newtheorem{acknowledgement}{Acknowledgement}

\newcommand{\rr}{\operatorname{r}}
\newcommand{\D}{\operatorname{d}}
\newcommand{\hdeg}{\operatorname{hdeg}}
\newcommand{\Ass}{\operatorname{Ass}}

\newcommand{\soc}{\operatorname{soc}}

\newcommand{\rad}{\operatorname{rad}}

\newcommand{\pd}{\operatorname{p.dim}}

\newcommand{\Ext}{\operatorname{Ext}}

\newcommand{\Hom}{\operatorname{Hom}}
\newcommand{\gr}{\operatorname{gr}}

\newcommand{\depth}{\operatorname{depth}}

\newcommand{\emb}{\operatorname{emb dim}}
\newcommand{\e}{\operatorname{e}}
\newcommand{\Char}{\operatorname{char}}

\newcommand{\coker}{\operatorname{coker}}
\newcommand{\HH}{\operatorname{H}}

\newcommand{\m}{\operatorname{m}}

\newcommand{\lo}{\longrightarrow}
\newcommand{\fm}{\frak{m}}
\newcommand{\fp}{\frak{p}}
\newcommand{\fq}{\frak{q}}
\newcommand{\fa}{\frak{a}}

\newcommand{\fn}{\frak{n}}

\begin{document}

\author[]{mohsen asgharzadeh}

\address{}
\email{mohsenasgharzadeh@gmail.com}

\title[ ]
{on the Rees,     Dilworth and Matsumura number}

\subjclass[2010]{ Primary 13H10; 13E15}
\keywords{homologically exact; Rees  and Dilworth number; complete-intersection; Gorenstein rings}
\dedicatory{}
\begin{abstract}
We deal with some questions posted by Matsumura and Watanabe about the Rees and Dilworth number, and their higher-dimensional versions.
\end{abstract}

\maketitle

\section{Introduction}

Rings with uniform bound on the minimal number of generators of their ideals are so special. Sally proved that they are rings of Krull dimension
at most one (see \cite{sal}). Here, $\ell(-)$ is the length function and $\mu(-)$ stands for the minimal number of generators. Let $(R,\fm)$ be of Krull dimension
at most one. Then $\mu(\fa)\leq \ell(R/ x R)$ for any ideal $\fa$ and for any element $x\in \fm$. In particular,  $\rr(R):=\inf\{\ell(R/ x R): x\in\fm\}$ is an upper bound for $\D(R):=\sup\{\mu(\fa): \fa\subset\fm\}$. These numbers were introduced by J. Watanabe in the zero dimensional case and by Matsumura in the 1-dimensional case. They called them the Rees and the Dilworth number, respectively.
In particular, $\D(R)\leq \rr(R)$ (also see \cite{trung}). The  local  ring $(R,\fm)$ is called \textit{exact} if $\D(R)=\rr(R)$. This word was coined by Matsumura, but after that no one has used this word.
This sometimes is called the maximum-minimum property, see e.g. \cite{i}. In \cite[Question A]{M},  Matsumura asked the following question:

\begin{question}
Are zero-dimensional complete-intersections  exact?
\end{question}

By using a high-school computation (and also by a modern argument) we present a  non-exact local complete-intersection ring  over $\mathbb{F}_2$.  Section 3 deals with the following question of Watanabe:

\begin{question}\label{1.4}
	Can one construct an example of an artinian Gorenstein local ring, which is not
	exact over a field of characteristic zero?
\end{question}

By $\m(R)$  we  mean
$\inf\{\ell(R/\underline{x}R):\underline{x}\textit{ is a  parameter sequence}\}$ and we call it  the \textit{Matsumura number}. In \cite[Question C]{M},  Matsumura asked:

 \begin{question} \label{mat2} i) Is $\m(R)$ an interesting integer?\\
ii) Is there any higher dimensional generalization of the exactness?
\end{question}

In \S 4  we present some basic properties of $\m(R)$. Then we apply an idea taken from  Gunston's thesis,
to introduce a new family of noetherian local rings which we  call them \textit{homologically exact} with the property that
they are the same as of the exact rings provided they are 1-dimensional. The class  of homologically exact rings contains the class of Buchsbaum rings.
This drops a dimension-restriction from a result of Ikeda.

In the sequel all rings are commutative, noetherian and local. The symbol m stands for the maximal
ideal. The modules are finitely generated, otherwise specializes. The Matsumura's beautiful book \cite{Mat}
is our  reference book.
\section{A complete-intersection ring which is not exact }

In this section $R$ is the ring $\mathbb{F}_2[X,Y,Z]/(X^2,Y^2,Z^2)$.
 The ring $R=\bigoplus_{i=0}^3R_i$ is both graded and local.
In particular, any element $a$ of $R $ is of the form $a=a_0+a_1+a_2+a_3$ where $a_i\in R_i$.

\begin{lemma}\label{12}
Let  $a\in R_2+R_3$ and $0\neq b\in R_1$. Then $(0:_Ra+b)\nsubseteqq \fm^2$.
\end{lemma}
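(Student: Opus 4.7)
The plan is to produce an explicit element $c \in (0 :_R a+b)$ whose degree-one component is non-zero; any such $c$ automatically lies outside $\fm^2 = R_2 + R_3$. Motivated by the fact that in characteristic $2$ every element of $R_1$ squares to zero, I would try the ansatz $c := b + c'$ with $c' \in R_2$ to be determined. Expanding,
\[
c(a+b) = ba + b^2 + c'a + c'b .
\]
Since $\Char(R)=2$, a direct check gives $b^2 = 0$. Since $a$ and $c'$ both belong to $\fm^2 = R_2+R_3$ and the grading forces $\fm^2 \cdot \fm^2 \subseteq R_{\geq 4} = 0$, we also get $c'a = 0$. Thus the requirement $c(a+b)=0$ collapses to the single equation $ba + c'b = 0$ in $R_3$.

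The key step is then to solve this last equation for $c' \in R_2$. Write $b = b_1 X + b_2 Y + b_3 Z$ with not all $b_i$ zero. The multiplication-by-$b$ map $R_2 \to R_3$ sends $YZ \mapsto b_1 XYZ$, $XZ \mapsto b_2 XYZ$ and $XY \mapsto b_3 XYZ$. Since $R_3 = \mathbb{F}_2 \cdot XYZ$ is one-dimensional and at least one $b_i$ is non-zero, this map is surjective, so one can find $c' \in R_2$ with $c'b = ba$. Setting $c := b + c'$ gives $c(a+b) = 0$, and the $R_1$-part of $c$ is $b \neq 0$, so $c \notin \fm^2$, which is exactly what we need.

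The argument contains no real obstacle: once the ansatz $c = b + c'$ is fixed, the characteristic-$2$ identity $b^2 = 0$ and the vanishing $\fm^4 = 0$ reduce the problem to a single scalar equation in the one-dimensional space $R_3$, which is solvable because $b\neq 0$. The only mildly delicate point is remembering that the role of characteristic $2$ here is essential: it is precisely what makes the low-degree part of $c \cdot b$ vanish automatically and forces the problem down to a single linear equation in the top socle.
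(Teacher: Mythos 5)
Your proof is correct. It is, however, a more laborious version of what the paper does in one line. The paper observes that any $r\in R$ with $r_0=0$ satisfies $r^2=0$ (in characteristic $2$ the square of $r_1+r_2+r_3$ is $r_1^2+r_2^2+r_3^2$, and each summand dies because $X^2=Y^2=Z^2=0$ and $R_{\geq 4}=0$); hence $a+b$ itself lies in $(0:_R a+b)$, and since its degree-one component is $b\neq 0$ it is not in $\fm^2=R_2+R_3$. In your setup this amounts to noticing that the equation $c'b=ba=a_2b$ has the obvious solution $c'=a_2$, so the surjectivity of multiplication by $b$ from $R_2$ onto the one-dimensional space $R_3$, while true and correctly verified, is not actually needed. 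Your route does establish the slightly stronger fact that \emph{every} element of $b+R_2$ solving a single linear equation annihilates $a+b$ (a one-parameter family of witnesses), whereas the paper's argument produces just the single witness $a+b$; but for the purposes of the lemma the extra generality buys nothing, and the paper's version has the advantage that the same squaring claim (Claim A) is reused later in the computation of $\rr(R)$.
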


\begin{proof}
 We bring the following claim:
\begin{enumerate}
\item[Claim] A) Let  $r\in R$ be such that $r_0=0$. Then $r^2=0$. Indeed, $(R_2)^2=  (R_3)^2=0$.
It remains to assume that $r$ is homogeneous and  is of degree one. The claim is clear from  the above table.
\end{enumerate}
 Suppose on the contradiction that $(0:_Ra+b)\subseteq \fm^2$. From Claim A), one has $(a+b)^2=0$. Thus, $a+b\in(0:_Ra+b)\subseteq \fm^2$. Since $a\in R_2$  we have $b\in \fm^2$, a contradiction.
\end{proof}

The following easy (but important) fact plays an essential role in this paper:

\begin{fact}\label{main}(Watanabe, Matsumura,  Ikeda; see \cite[Theorem 2]{M})
Let $(A, \fm)$ be a
local Noetherian ring, let $M$ be a finitely generated A-module with $\dim M \leq 1$, let $N\subset M$ and  $a \in \fm$. Then  $\mu(N)\leq\ell(M/aM)$. Furthermore, $\mu(N)=\ell(M/aM)$ if and only if the following  claims hold:
\begin{enumerate}
\item[a)] $\ell(M/aM)<\infty$, and $\dim (M/N) = 0$.
\item[b)] $(0 :_M a) \subset N$, and
$\fm N = aN$.
\end{enumerate}
\end{fact}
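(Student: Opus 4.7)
The plan is to prove the inequality and the equality criterion in parallel by establishing the chain
$$\mu(N) \leq \ell(N/aN) \leq \ell(M/aM)$$
and then reading off precisely when each link becomes an equality.

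The first link is Nakayama: since $a\in\fm$ one has $aN\subseteq \fm N$, and the natural surjection $N/aN \twoheadrightarrow N/\fm N$ yields $\mu(N)=\dim_{A/\fm}(N/\fm N)\leq \ell(N/aN)$, with equality exactly when $\fm\cdot(N/aN)=0$, i.e.\ $\fm N=aN$.

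For the second link, the plan is to apply the snake lemma to $0\to N\to M\to M/N\to 0$ under multiplication by $a$, producing the six-term exact sequence
\begin{equation*}
0 \to (0:_N a)\to (0:_M a)\to (0:_{M/N} a)\xrightarrow{\delta} N/aN\to M/aM\to (M/N)/a(M/N)\to 0.
\end{equation*}
Splicing this into short exact sequences and taking lengths yields
\begin{equation*}
\ell(M/aM)-\ell(N/aN)=\ell((M/N)/a(M/N))-\ell(\im\delta).
\end{equation*}
The crux is the numerical fact that for any finitely generated $P$ with $\dim P\leq 1$ and $\ell(P/aP)<\infty$ one has $\ell(P/aP)\geq \ell((0:_P a))$: for $\dim P=0$ this is the snake lemma applied to $P\xrightarrow{a}P$ (and equality holds), and for $\dim P=1$ (so that $a$ is necessarily a parameter on $P$) the difference equals the Hilbert--Samuel multiplicity $e(a;P)\geq 1$. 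Applied with $P=M/N$, whose dimension is at most one, and combined with the trivial bound $\ell(\im\delta)\leq\ell((0:_{M/N}a))$, this forces $\ell(M/aM)\geq \ell(N/aN)$ and hence the desired inequality. The main obstacle I expect here is precisely this nonnegativity step: it is the only place the hypothesis $\dim M\leq 1$ enters essentially, and without it the right-hand side of the length identity can be negative.

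For the equality $\mu(N)=\ell(M/aM)$, finiteness of $\mu(N)$ immediately forces $\ell(M/aM)<\infty$. Equality in the first link gives $\fm N=aN$. Equality in the second requires both that $\delta$ be injective---by exactness in the six-term sequence, equivalently that the preceding map $(0:_M a)\to (0:_{M/N} a)$ vanishes, i.e.\ $(0:_M a)\subseteq N$---and that $\ell((M/N)/a(M/N))=\ell((0:_{M/N}a))$, which under $\dim(M/N)\leq 1$ says $e(a;M/N)=0$ and hence $\dim(M/N)=0$. Assembling these gives exactly conditions (a) and (b) of the statement.
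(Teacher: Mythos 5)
The paper does not actually prove this statement: it is recorded as a Fact with a pointer to Matsumura's paper \cite[Theorem 2]{M}, so there is no internal proof to compare against. Your argument is correct and is, in essence, the classical one: the chain $\mu(N)\leq\ell(N/aN)\leq\ell(M/aM)$, with the first link controlled by Nakayama and the second by the snake lemma applied to multiplication by $a$ on $0\to N\to M\to M/N\to 0$. The one point worth making explicit in a full write-up is the finiteness bookkeeping needed before you may take lengths in the six-term sequence: once $\ell(M/aM)<\infty$, the module $(0:_{M/N}a)$ is annihilated by $a$ and hence has dimension at most $\dim\bigl((M/N)/a(M/N)\bigr)=0$, so all six terms have finite length and the alternating-sum identity is legitimate. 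Your key numerical input, $\ell(P/aP)-\ell(0:_Pa)=e(a;P)\geq 1$ when $\dim P=1$ and $=0$ when $\dim P=0$, is exactly the Auslander--Buchsbaum--Serre identification of the Koszul Euler characteristic with the multiplicity (\cite[Theorem 14.10]{Mat}), and you are right that this is the only place $\dim M\leq 1$ is used. The equality analysis is also sound: each of the three inequalities $\ell(\im\delta)\leq\ell((0:_{M/N}a))\leq\ell\bigl((M/N)/a(M/N)\bigr)$ and $\mu(N)\leq\ell(N/aN)$ is an equality precisely under one of the listed conditions, and all steps are reversible, so both directions of the `if and only if' follow.
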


\begin{example}
One has $\rr(R)=4$.
\end{example}

\begin{proof}
	  In the light  of \cite[Example 1.4]{i} we observe that $\D(R)=3$\footnote{One may prove this by hand as  a high school exercise (this is tedious at least for me).}.
Recall that  $\D(R)\leq \rr(R)$.  Hence, $\rr(R)\geq 3$. Since $\ell(R/xR)=4$, it is enough to show that $\inf\{\ell(R/ a R): a\in\fm\} \neq 3.$
Recall that $\mu(\fm^2)=\D(R)$. We apply Fact \ref{main} for $N:=\fm^2$ and $M:=R$. We are going to show that $(0 : r) \nsubseteqq \fm^2$ for all $r\in\fm$. We may assume that
$r\neq 0$ and that $r_0=0$.  Suppose first that $r$ is homogeneous. To this end, we bring 3 claims according to $\deg(a)$.
 \begin{enumerate}
\item[Claim]A)
Let  $r\in R_1$. Then $(0:_Rr)\nsubseteq \fm^2$. \\Indeed,
due to Claim A) from Lemma \ref{12}, $r^2=0$. So, $r\in (0:_Rr)\setminus \fm^2$.
\item[Claim]B)
Let  $r\in R_2$. Then $(0:_Rr)\nsubseteq \fm^2$.
Indeed,
suppose $r:=xy+yz+zx$. Then $$(x+y)r=x^2y+xyz+zx^2+xy^2+y^2z+xzy=2xyz=0.$$ So, $(x+y)\in (0:_Rr)\nsubseteq \fm^2$.
Suppose $r:=xy+yz$. Then $yr=0$. So, $y\in (0:_Rr)\nsubseteq \fm^2$. By the symmetric, the same claim hold for both of  $xy+xz$, and $yz+xz$.
Suppose $r:=xy$. Then $yr=0$. So, $y\in (0:_Rr)\nsubseteq \fm^2$. By the symmetric, the same claim hold for both of  $xy$, and $xz$.
Thus, the desired claim checked for all elements of $R_2$.
\item[Claim]C)
Let  $r\in R_3$. Then $(0:_Rr)\nsubseteq \fm^2$.\\
Indeed,
we have $R_i=0$ for all $i>3$. Thus $R_1r=0$. So, $R_1\subset (0:_Rr)\nsubseteq \fm^2$.
\end{enumerate}

Thus, the proof in the homogenous case follows.
Write $r=r_1+r_2+r_3$. In view of Lemma \ref{12}, we can assume that $r_1=0$, i.e., $r=r_2+r_3$. Since $r$ is not homogeneous, $r_2$ and $r_3$
are nonzero. By the proof of Claim B) there is non-zero $b\in R_1$ such that $r_2b=0$.
Recall that $R_1R_3=0$, because $R_i=0$ for all $i>3$.
In particular, $bR_3=0$. So, $b\in(0 : r_2+r_3)$. Combine this along with  $b\notin \fm^2$ to get that $(0 : r_2+r_3) \nsubseteqq \fm^2$.
\end{proof}

\begin{corollary}\label{co}
The ring $R$ is a zero-dimensional local  complete-intersection ring which is not exact.
\end{corollary}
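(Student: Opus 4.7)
The plan is to verify each of the four properties named in the statement, with most of the work already done in the Example.

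First I would check that $R = \mathbb{F}_2[X,Y,Z]/(X^2,Y^2,Z^2)$ is a zero-dimensional local complete intersection. Since $X^2, Y^2, Z^2$ are polynomials in disjoint sets of variables, they form a regular sequence in the polynomial ring $\mathbb{F}_2[X,Y,Z]$; thus $R$ is a complete-intersection quotient, and $\dim R = 3 - 3 = 0$. For locality, $R$ is a finite-dimensional $\mathbb{F}_2$-algebra in which every element with nonzero degree-zero part is a unit (its degree-zero component is $1$ in $\mathbb{F}_2$, and the nilpotent part can be inverted by a geometric series that terminates because $\fm = (x,y,z)$ is nilpotent). Hence $\fm$ is the unique maximal ideal.

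Next I would address non-exactness, which is the heart of the statement. By definition $R$ is exact iff $\D(R) = \rr(R)$. The preceding Example established $\rr(R) = 4$, and the footnote (citing \cite[Example 1.4]{i}) gives $\D(R) = 3$. Consequently $\D(R) = 3 < 4 = \rr(R)$, so $R$ fails to be exact.

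The main obstacle in this corollary is really absorbed into the preceding Example: proving $\rr(R) = 4$ required ruling out any parameter $r\in\fm$ with $\ell(R/rR) = 3$, which was handled via Fact \ref{main} by showing $(0:_R r) \nsubseteq \fm^2$ for every nonzero $r\in\fm$, split into the three homogeneous-degree claims and Lemma \ref{12} for the inhomogeneous case. Given that work, the corollary is essentially a matter of assembling: complete-intersection and dimension from the presentation, locality from $\fm$ being nilpotent, and non-exactness from the strict inequality $\D(R) < \rr(R)$ combining the Example with Ikeda's computation.
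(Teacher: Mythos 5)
Your proposal is correct and follows essentially the same route as the paper: the corollary is a direct assembly of the preceding Example's computation $\rr(R)=4$ with Ikeda's value $\D(R)=3$, plus the routine observations that $(X^2,Y^2,Z^2)$ is a regular sequence and that $R$ is local with nilpotent maximal ideal. (The paper also offers an alternative ``second proof'' of $\rr(R)\geq 4$ via Lefschetz properties, but your argument matches its primary one.)
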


One can prove $\rr(R)\geq4$ by using the machinery of \textit{Lefschetz} properties. Let us recall the following  comment  of Watanabe:

\begin{discussion} \label{dis}
Let $(A,\fm)$ be a graded artinian algebra over a field $K$.
\begin{enumerate}
\item[i)] Recall that $A$ has the weak Lefschetz property if
there is an element  $\ell$ of degree $1$ such that the multiplication $×\ell:A_i \to A_{i+1}$ has maximal
rank, for every $i$.
\item[ii)]The Hilbert function
$(h_0,\ldots,h_c)$ is said to be unimodal if the  there exists $j$ such that
$$h_0\leq h_1\leq h_i \leq \ldots\leq h_j \geq h_{j+1}\geq\ldots \geq h_{c-1}\geq h_{c}$$
\item[iii)](See \cite[Proposition 3.5]{wat3}) Suppose $A$  has
unimodal Hilbert function, and let $\ell \in A_1$. Then $$\ell\textit{  is a weak Lefschetz element for }A\Longleftrightarrow\sup\{\dim A_i\}=\ell(A/\ell A).$$
\item[iv)] Due to \cite[Lemma 2.2]{i}, we have $\rr(A)=\inf\{\ell(A/r):r\in A_1\}.$
\item[v)] (See \cite[Theorem 13]{iw}) Set $A:=\frac{K[X_1, \ldots , X_n]}{(X_1^{d_1},\ldots,
X_n^{d_n})}$. Then the maximum and the minimum
of $\{I \lhd A | \mu(I) = \D(A)\}$ are powers of the maximal ideal. In particular, $\sup\{\mu(\fm^i)\}= \D(A)$.
\item[vi)] It may be $\sup\{\mu(\fm^i)\}\neq \D(A)$ (see \cite[Example 4.4]{i}).
\end{enumerate}
\end{discussion}

Now, we are ready to present a  modern proof of Corollary \ref{co}:

\textbf{Second proof of Corollary \ref{co}}.
One has $\rr(R)\geq4$: Indeed, one can check easily that $R$ has no Lefschetz element (see e.g. \cite[Remark 2.9]{wja}). The Hilbert function of $R$
is unimodal. In view of Discussion \ref{dis}(iii) $\sup\{\dim R_i\}\neq\inf\{\ell(R/\ell R):\ell\in R_1\}$. By Discussion \ref{dis}(iv) $\rr(R)=\inf\{\ell(R/rR):r\in R_1\}$. In the light of Discussion \ref{dis}(v) we see $\D(R)=\sup\{\dim R_i\}$. Also, $\D(R)\leq \rr(R)$. Putt all of these together to observe
$\rr(R)>\sup\{\dim R_i\}=3.$

\section{Dealing with Question \ref{1.4}}

\begin{lemma}\label{pro}
Let $R$ be  an artinian local ring such that $\ell(R)\leq\mu(\fm)+2$. Then $R$ is exact.	
	\end{lemma}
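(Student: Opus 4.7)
The plan is to show that both $\D(R)$ and $\rr(R)$ equal $\mu(\fm)$; exactness follows immediately. The inequality $\D(R) \leq \rr(R)$ was recalled in the introduction, and $\D(R) \geq \mu(\fm)$ is immediate by taking $\fa = \fm$, so the task reduces to producing an element $a \in \fm$ with $\ell(R/aR) \leq \mu(\fm)$.

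First I would unpack the hypothesis. From the short exact sequence $0 \to \fm^2 \to \fm \to \fm/\fm^2 \to 0$ one has $\ell(\fm) = \mu(\fm) + \ell(\fm^2)$, while $\ell(R) = 1 + \ell(\fm)$. Substituting these into $\ell(R) \leq \mu(\fm)+2$ forces $\ell(\fm^2) \leq 1$, and in either sub-case Nakayama yields $\fm^3 = 0$ (trivially if $\fm^2 = 0$, and because $\fm \cdot \fm^2 \subsetneq \fm^2$ is the only proper submodule of the length-one module $\fm^2$ otherwise).

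Next I would pick $a \in \fm$ with $a\fm = \fm^2$. If $\fm^2 = 0$, any nonzero $a \in \fm$ works. If $\ell(\fm^2) = 1$, choose $x,y \in \fm$ with $xy \neq 0$; since $\fm^3 = 0$, not both of $x, y$ can lie in $\fm^2$, so after relabeling $x \in \fm\setminus\fm^2$. Then $x\fm$ is a nonzero submodule of the simple module $\fm^2$, hence $x\fm = \fm^2$, and we set $a = x$.

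Finally I would apply Fact \ref{main} with $M = R$, $N = \fm$, and this $a$. The finiteness and dimension conditions are automatic in the artinian local setting, $(0:_R a) \subseteq \fm$ holds because $a \neq 0$ in the local ring $R$, and $\fm \cdot \fm = a\fm$ by construction. The ``equality half'' of Fact \ref{main} then delivers $\mu(\fm) = \ell(R/aR)$, whence $\rr(R) \leq \mu(\fm)$. Combined with $\mu(\fm) \leq \D(R) \leq \rr(R)$ this gives $\D(R) = \rr(R) = \mu(\fm)$, as required. The only step that requires any genuine thought is the construction of $a$ in the sub-case $\ell(\fm^2)=1$; the rest is bookkeeping with lengths.
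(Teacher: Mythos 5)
Your proof is correct, but it takes a different route from the paper's in the essential case. The paper splits on $\ell(R)=\mu(\fm)+1$ versus $\mu(\fm)+2$; the first case is immediate from $\ell(R/xR)\leq\ell(R)-1$, and in the second the paper picks $x\in\fm\setminus\fm^2$ with $x\fm\neq 0$ and rules out $\ell(R/xR)=\mu(\fm)+1$ by a contradiction: it compares $\ell(R/xyR)$ with $\ell(R/xR)$ via the sequence $0\to xR/xyR\to R/xyR\to R/xR\to 0$ and invokes Nakayama to force $x=0$. It never uses Fact \ref{main} here. You instead extract the structural content of the hypothesis --- $\ell(R)=1+\mu(\fm)+\ell(\fm^2)$ forces $\ell(\fm^2)\leq 1$ and $\fm^3=0$ --- use simplicity of $\fm^2$ to manufacture an $a$ with $a\fm=\fm^2$, and then verify the equality criterion of Fact \ref{main} with $N=\fm$ directly. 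Your argument is more unified (no case split beyond $\fm^2=0$ or not) and makes visible exactly why the bound $\mu(\fm)+2$ is the right threshold, namely that it is equivalent to $\fm^2$ being of length at most one; the paper's version is more self-contained in that it needs only length counting and Nakayama rather than the equality half of Fact \ref{main}. Both arguments silently assume $\fm\neq 0$ when choosing a nonzero element of $\fm$, so neither loses anything to the other on that score.
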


\begin{proof} The case $\ell(R)=\mu(\fm)$ never happens, because $\mu(\fm)=\ell(\fm/\fm^2)<\ell(R)$. Suppose first that $\ell(R)=\mu(\fm)+1$. Let $x\in\fm$ be nonzero.
Then $$\mu(\fm)\leq\sup\{\mu(\fa)\}=\D(R)\leq\rr(R)=\inf\{\ell(R/rR)\}\leq\ell(R/xR)\leq\ell(R)-1=\mu(\fm).$$
Therefore, $\D(R)=\rr(R)=\mu(\fm)$. So, $R$ is exact.

Now, we deal with $\ell(R)=\mu(\fm)+2$. Since $\mu(\fm)=\ell(\fm/\fm^2)$
	we may assume that $ \fm^2\neq 0$.  Let $x\in\fm\setminus \fm^2$. There is an $y\in\fm$ such that $xy\neq 0$. Indeed, if such a thing does not happen, then it yields that $\fm^2=0$ which is excluded.
	Clearly, $\ell(R/xR)<\ell(R)=\mu(\fm)+2.$ We claim that $\ell(R/xR)=\mu(\fm)$.
	Indeed, recall   that $\ell(R/xR)\geq\rr(R)\geq \D(R)\geq\mu(\fm)$. If $\ell(R/xR)\neq\mu(\fm)$, then $\ell(R/xR)=\mu(\fm)+1$. Also,  $\ell(R/xyR)\leq\ell(R)-1$. Since $R/xyR\to R/xR\to 0$, $$\mu(\fm)+1=\ell(R)-1\geq\ell(R/xyR)\geq\ell(R/xR)=\mu(\fm)+1.$$ So, $\ell(R/xyR)=\ell(R/xR)$. Now, we look at the exact sequence $$0\lo xR/yxR\lo R/xyR\lo R/xR\lo 0.$$Taking length, implies that
	$xyR=xR$. By Nakayama's lemma, $x=0$. This contradiction says that $\ell(R/xR)=\mu(\fm)$. By definition, $R$ is  exact.
	\end{proof}

\begin{lemma}\label{m2}
	Let $R$ be an artinian local ring such that $\fm^2=0$. Then $R$ is exact.	
\end{lemma}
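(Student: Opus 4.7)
The plan is to reduce this statement directly to Lemma \ref{pro}. The key observation is that the condition $\fm^2 = 0$ forces $\fm$ to be annihilated by $\fm$, hence $\fm$ becomes a vector space over the residue field $k := R/\fm$. Consequently
\[
\ell(R) = \ell(R/\fm) + \ell(\fm) = 1 + \dim_k \fm = 1 + \mu(\fm),
\]
since for a $k$-vector space length equals dimension equals minimal number of generators. In particular $\ell(R) \leq \mu(\fm) + 2$, so Lemma \ref{pro} (case $\ell(R) = \mu(\fm) + 1$) applies and $R$ is exact.

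If a self-contained argument is preferred instead of citing the previous lemma, I would argue directly. First, every ideal $\fa \subseteq \fm$ satisfies $\fm\fa \subseteq \fm^2 = 0$, so $\fa$ is itself a $k$-vector space and $\mu(\fa) = \dim_k \fa \leq \dim_k \fm = \mu(\fm)$. Hence $\D(R) \leq \mu(\fm)$, and since $\mu(\fm) \leq \D(R)$ trivially, we have $\D(R) = \mu(\fm)$.

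Next, for any nonzero $x \in \fm$ the principal ideal $xR$ is annihilated by $\fm$, so it is a $k$-vector space of dimension one, giving $\ell(xR) = 1$ and therefore $\ell(R/xR) = \ell(R) - 1 = \mu(\fm)$. Thus $\rr(R) \leq \mu(\fm)$. Combined with the general chain $\mu(\fm) = \D(R) \leq \rr(R)$, this forces $\rr(R) = \D(R) = \mu(\fm)$, which is the definition of exactness.

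There is essentially no obstacle here: the hypothesis $\fm^2 = 0$ collapses the ring enough that both invariants coincide automatically with $\mu(\fm)$. The only thing to be careful about is noting explicitly why submodules of $\fm$ behave as $k$-vector spaces, since that is what converts the generator count into length.
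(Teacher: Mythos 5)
Your reduction to Lemma \ref{pro} via $\ell(R)=\ell(R/\fm)+\ell(\fm)=\mu(\fm)+1$ is exactly the paper's proof of this lemma. The self-contained alternative you sketch is also correct (modulo the trivial degenerate case $\fm=0$, which the paper excludes explicitly by assuming $\fm$ nonzero), but it essentially just unwinds the $\ell(R)=\mu(\fm)+1$ case of Lemma \ref{pro} in this special setting.
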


\begin{proof} We may assume that $\fm$ is nonzero. First note that $\mu(\fm)=\ell(\fm/\fm^2)=\ell(\fm)$.
	In view of  $0\to \fm \to R\to R/\fm\to 0$ we get that
	$\mu(\fm)=\ell(\fm)=\ell(R)-1$.  The claim is now clear from
	Lemma \ref{pro}.
\end{proof}

\begin{proposition}\label{3}
	Let $R$ be an artinian Gorenstein local ring such that $\fm^3=0$. Then $R$ is exact.	
\end{proposition}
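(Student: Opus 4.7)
The plan is to derive Proposition \ref{3} as an immediate corollary of Lemma \ref{pro}. Concretely, I would verify that under the hypotheses ($R$ artinian Gorenstein, $\fm^3=0$) one always has $\ell(R)\leq\mu(\fm)+2$, and then invoke Lemma \ref{pro} verbatim.

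First I would dispose of the degenerate case $\fm^2=0$, which is already settled by Lemma \ref{m2}. So assume $\fm^2\neq 0$. The crucial observation is that $\fm^3=0$ yields $\fm\cdot\fm^2=0$, hence
$$\fm^2\subseteq(0:_R\fm)=\soc(R).$$
Because $R$ is artinian Gorenstein, its socle is a one-dimensional $R/\fm$-vector space, so $\ell(\soc(R))=1$. Combined with $\fm^2\neq 0$, this forces $\ell(\fm^2)=1$.

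Now the chain $0\subset\fm^2\subset\fm\subset R$ together with $\mu(\fm)=\ell(\fm/\fm^2)$ (Nakayama) gives
$$\ell(R)=\ell(R/\fm)+\ell(\fm/\fm^2)+\ell(\fm^2)=1+\mu(\fm)+1=\mu(\fm)+2.$$
Thus $R$ satisfies the hypothesis of Lemma \ref{pro}, and is therefore exact.

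There is really no serious obstacle here; the only step requiring a moment's care is verifying that the Gorenstein hypothesis is used precisely at the point of bounding $\ell(\soc(R))=1$, while the hypothesis $\fm^3=0$ supplies the containment $\fm^2\subseteq\soc(R)$. The rest is bookkeeping with the length function. It is worth noting that this argument nowhere uses that the residue field has characteristic zero, which is consistent with the spirit of Section 2 (where the non-exact examples only appear beyond the range $\fm^3=0$).
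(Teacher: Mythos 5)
Your proof is correct and follows essentially the same route as the paper's: reduce to $\fm^2\neq 0$ via Lemma \ref{m2}, use $\fm^3=0$ to place $\fm^2$ inside the one-dimensional socle of the Gorenstein ring, deduce $\ell(R)=\mu(\fm)+2$ by the length computation along $0\subset\fm^2\subset\fm\subset R$, and invoke Lemma \ref{pro}. No differences worth noting.
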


\begin{proof} By Lemma \ref{m2}, we may assume that $\fm^2$ is nonzero. Note that $\mu(\fm^2)=\ell(\fm^2/\fm^3)=\ell(\fm^2)$. Since
	$R$ is Gorenstein  its socle is one-dimensional.
Due to $\fm^3=0$, we have $\fm^2\subset \soc(R)$. Consequently, $1\leq\ell(\fm^2)\leq\ell(\soc(R))=1$.
 Thus,
	$\mu(\fm^2)=\ell(\fm^2)=1$. In view of  $0\to \fm^2 \to \fm\to \fm/\fm^2\to 0$ we get that
	$\mu(\fm)=\ell(\fm)-\ell(\fm^2)=\ell(\fm)-1$.  Also, we apply the exact sequence  $0\to \fm  \to R\to R/\fm \to 0$ to see
  $\ell(\fm)=\ell(R)-1.$  Therefore, $\mu(\fm)=\ell(\fm)-1=\ell(R)-2.$ It remains to apply
	Lemma \ref{pro}.
\end{proof}
	
The Gorenstein assumption  in the above proposition  is important:
\begin{example}\label{e9m}
Let $R:=\frac{\mathbb{C}[X,Y,Z,W]}{ (X^2,Y^2,Z^2,W^2,XY,ZW)}$. In view of \cite[Page 160]{M} $R$ is not exact. Let us
present a proof: By a result of Watanabe if a monomial algebra $k[x_i]/I$ were be exact then we should have $(\sum x_i)\fm=\fm^2$ (this criterion works only for monomial algebras,  see Example \ref{ew}).
One has $\fm^2=(wx,wy,xz,yz)$ and 
$\fm(x+y+z+w)=(xz+xw,yz+yw,xz+yz,xw+yw)$. From this we see that $$\dim_{\mathbb{Q}}(xz+xw,yz+yw,xz+yz,xw+yw)_2<4=\dim_{\mathbb{Q}}(\fm^2)_2.$$We apply Watanabe's criterion to see $R$ is not exact.
Note that  $\fm^3$ is generated by degree three monomials in $\{x,y,z,w\}$. They are zero by the relations $x^2=y^2=z^2=w^2=xy=zw=0$. So,
$\fm^3=0$.\end{example}

\begin{notation} \label{not}Let $(R,\fm,k)$ be a local  artinian $k$-algebra. There is $\ell$ such that $\fm^{\ell+1}=0$ and $\fm^{\ell}\neq0$.
 We set $h_i:=\dim_k(\frac{\fm^i}{\fm^{i+1}})$. By the Hilbert function of $R$
we mean $\HH(R):=(1,h_1,\ldots,h_{\ell})$.
\end{notation}

\begin{fact}\label{low}(Watanabe)
	Let $(R,\fm,k)$ be an artinian  local ring. The following assertions hold.
	\begin{enumerate}
		\item[i)] If $\emb(R)=1$, then $R$ is exact.
		\item[ii)] If $\emb(R)=2$ and $k$ is infinite, then $\D(R)=\rr(R)$.
\item[iii)] If $\rr(R)\leq3$ and $k$ is infinite, then $\D(R)=\rr(R)$.
\item[iv)] One has $\rr(R)\leq \rr(\gr_{\fm}(R))$\end{enumerate}
\end{fact}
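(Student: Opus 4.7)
My plan is to handle the four items separately but around a common theme: combine the universal bound $\D(R)\leq\rr(R)$ from Fact \ref{main} with the Lefschetz machinery of Discussion \ref{dis} to certify equality on the graded side, and then transfer the conclusion back to $R$.

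For (i), $\emb(R)=1$ means $\fm=(x)$ is principal, so by Nakayama every ideal of $R$ is a power of $\fm$; hence $\mu(\fa)\leq 1$ for all $\fa$ and $\D(R)\leq 1$. Taking $r=x$ in the definition of $\rr(R)$ gives $\ell(R/xR)=\ell(R/\fm)=1$, so $\rr(R)\leq 1$, and the chain $\mu(\fm)\leq \D(R)\leq \rr(R)\leq 1$ collapses (the case $\fm=0$ is trivial). I would then reduce (iii) to (i), (ii), and the bound $\D(R)\geq \mu(\fm)=\emb(R)$: whenever $\emb(R)\geq \rr(R)$ the two inequalities pinch, while $\emb(R)<\rr(R)\leq 3$ forces $\emb(R)\in\{1,2\}$, cases already handled.

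For (iv), which is the formal step, I would fix $r\in\fm\setminus\fm^2$ with leading form $r^{*}\in A_1$, where $A:=\gr_{\fm}(R)$. For each $n\geq 1$ the containment $r\fm^{n-1}\subseteq rR\cap \fm^n$ induces a surjection on $n$-th graded pieces $(A/r^{*}A)_n=\fm^n/(\fm^{n+1}+r\fm^{n-1})\twoheadrightarrow \fm^n/(\fm^{n+1}+rR\cap\fm^n)=\gr_{\fm}(R/rR)_n$. Summing over $n$ yields $\ell(R/rR)=\ell(\gr_{\fm}(R/rR))\leq\ell(A/r^{*}A)$. Taking infimum over $r\in\fm\setminus\fm^2$ on the right, and using Discussion \ref{dis}(iv) to identify $\rr(A)=\inf_{\lambda\in A_1}\ell(A/\lambda A)$ (every nonzero $\lambda\in A_1$ arises as some $r^{*}$), gives $\rr(R)\leq\rr(A)$.

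For item (ii), the hard core, I would reduce to the graded algebra $A$ via (iv). For $A$ graded artinian of embedding dimension at most $2$ over an infinite field, a classical consequence of the Hilbert--Burch structure of codimension-two ideals is that $A$ admits a weak Lefschetz element $\lambda\in A_1$ and has unimodal Hilbert function. By Discussion \ref{dis}(iii) this gives $\rr(A)=\sup_i\dim_k A_i$. On the $\D$-side, the identity $\mu(\fm^j)=\dim_k(\fm^j/\fm^{j+1})=h_j(A)$, valid for every $j$, forces $\D(R)\geq\sup_i h_i(A)$. Chaining $\D(R)\leq\rr(R)\leq\rr(A)=\sup_i h_i(A)\leq\D(R)$ then collapses to equality. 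The main obstacle I foresee is precisely this WLP input: it is not formal but relies on external structure theorems about codimension-two ideals, and one must pinpoint exactly where the infinite-field hypothesis is used (namely, in choosing a generic $\lambda\in A_1$ to be Lefschetz). The other items are soft once the leading-form surjection of (iv) is written down carefully.
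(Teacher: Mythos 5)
Your items (i) and (iii) are correct and essentially coincide with the paper's argument: (i) is the collapse $\mu(\fm)\le\D(R)\le\rr(R)\le\ell(R/xR)=1$ for $\fm=(x)$, and (iii) is the same pinching, with the paper casing on $\D(R)\le 2$ versus $\D(R)=3$ where you case on $\emb(R)$ against $\rr(R)$. For (iv) the paper simply cites \cite[Proposition 2]{wat2}, whereas you actually prove it; your leading-form argument is the standard one and is correct: the modular law identifies $\gr_{\fm}(R/rR)_n$ with $\fm^n/(\fm^{n+1}+(rR\cap\fm^n))$, which is a quotient of $(A/r^{*}A)_n=\fm^n/(\fm^{n+1}+r\fm^{n-1})$, and summing lengths and then letting $r^{*}$ range over $A_1\setminus\{0\}$ gives $\rr(R)\le\rr(\gr_{\fm}(R))$ via Discussion \ref{dis}(iv).

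The gap is in (ii). The paper disposes of it by citing \cite[Theorem 4.2]{i}; you instead reduce to $A:=\gr_{\fm}(R)$ via (iv) and then invoke, as a ``classical consequence of the Hilbert--Burch structure,'' that every graded artinian quotient of $k[x,y]$ over an infinite field admits a weak Lefschetz element. That assertion is the entire content of the statement and is not a formal consequence of Hilbert--Burch; it is a genuine theorem (cf. \cite[Proposition 4.4]{wja}), and the usual proofs go through the strong Lefschetz property and hence assume characteristic zero, whereas item (ii) carries no characteristic hypothesis. So as written you have replaced one uncited theorem by another, stated more vaguely and with the positive-characteristic case left unaddressed. The surrounding bookkeeping --- unimodality of codimension-two Hilbert functions, $\mu(\fm^j)=h_j(A)\le\D(R)$, and the chain $\D(R)\le\rr(R)\le\rr(A)=\sup_i h_i(A)\le\D(R)$ --- is fine, and it is a legitimate alternative route; but (ii) is not established until the WLP input is either proved or correctly cited in the stated generality (infinite field, arbitrary characteristic). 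The safest repair is to do what the paper does and quote Ikeda's local argument directly.
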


\begin{proof}	i) Let $x$ be such that $\fm=(x)$. Then $\mu(\fm)=\ell (R/xR)$. Thus, $\D(R)=\rr(R)=1$.

ii) This is in \cite[Theorem 4.2]{i}.

iii) If $\D(R)\leq2$, by ii) we get the claim. We may assume that $\D(R)=3$. Recall that $\D(R)\leq\rr(R)$. So, $\D(R)=\rr(R)=3$.

iv)  This is in \cite[Proposition 2]{wat2}.
\end{proof}

Following Sally \cite{sal2}, a local artinian ring  is said to be \textit{stretched} if $\fm^2$ is a principal ideal.

\begin{proposition} \label{st}
Any  stretched Gorenstein algebra is exact.
\end{proposition}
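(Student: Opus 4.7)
The plan is to apply Fact \ref{main} with $M=R$, $N=\fm$, and a carefully chosen element $a=x\in\fm$ satisfying $x\fm=\fm^2$. If such an $x$ can be located, Fact \ref{main} upgrades the universal bound $\mu(\fm)\leq\ell(R/xR)$ to the equality $\mu(\fm)=\ell(R/xR)$; combined with the chain $\mu(\fm)\leq\D(R)\leq\rr(R)\leq\ell(R/xR)$, this forces $\D(R)=\rr(R)$, which is exactness.

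First I would dispose of the case $\fm^2=0$ via Lemma \ref{m2}, so assume henceforth that $\fm^2\neq 0$. The stretched hypothesis gives $\fm^2=(z)$ for some $z$, and Nakayama's lemma then makes $\fm^2/\fm^3$ a one-dimensional $k$-vector space generated by $\bar z$. Let $x_1,\ldots,x_h$ minimally generate $\fm$. Since the products $\overline{x_ix_j}$ span the one-dimensional quotient $\fm^2/\fm^3$, at least one such product is nonzero; lifting the corresponding scalar to a unit in $R$ and absorbing the $\fm^3$-error into this unit produces some $u\in R^{\times}$ with $x_ix_j=uz$. In particular $z=x_i(u^{-1}x_j)\in x_i\fm$, and the automatic containment $x_i\fm\subseteq\fm^2=(z)$ gives $x_i\fm=\fm^2$. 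Setting $x:=x_i$ is the element we sought.

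Verifying the remaining clauses of Fact \ref{main} is routine: $\dim R=0\leq 1$, $\ell(R/xR)<\infty$, $\dim(R/\fm)=0$, and $(0:_R x)\subseteq\fm$ because $x\neq 0$ prevents $1$ from annihilating $x$. Fact \ref{main} therefore delivers $\mu(\fm)=\ell(R/xR)$, and the sandwich
$$\mu(\fm)\leq\D(R)\leq\rr(R)\leq\ell(R/xR)=\mu(\fm)$$
closes the argument.

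The main obstacle is locating the good generator $x$, since a priori $x\fm$ could sit strictly inside $\fm^2$; the one-dimensionality of $\fm^2/\fm^3$, an immediate consequence of the stretched hypothesis via Nakayama, is what rescues the argument by ensuring that some nonzero product $x_ix_j$ hits $z$ up to a unit. The Gorenstein hypothesis does not appear to play an explicit role in this outline, though it may well enter a more conceptual proof via socle or Matlis-duality considerations.
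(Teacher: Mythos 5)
Your proof is correct, but it takes a genuinely different route from the paper's. The paper passes to the associated graded ring: it invokes Macaulay's theorem (or Sally's result) to pin down the Hilbert function as $(1,n,1,\ldots,1)$, deduces from the string of one-dimensional graded pieces that $\gr_{\fm}(R)$ has a weak Lefschetz element, reads off $\rr(\gr_{\fm}(R))=n$ from Discussion \ref{dis}, and then descends via Watanabe's inequality $\rr(R)\leq\rr(\gr_{\fm}(R))$ (Fact \ref{low}(iv)). You instead work entirely inside $R$: the one-dimensionality of $\fm^2/\fm^3$ forces some product $x_ix_j$ to be a unit multiple of the generator $z$ of $\fm^2$ (the $\fm^3$-error is absorbed precisely because $\fm^3=\fm z$), whence $x_i\fm=\fm^2$, and Fact \ref{main} applied with $N=\fm$ and $a=x_i$ closes the sandwich $\mu(\fm)\leq\D(R)\leq\rr(R)\leq\ell(R/x_iR)=\mu(\fm)$. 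This is exactly the strategy the paper reserves for the \emph{almost} stretched case (Proposition \ref{als}), so in effect you have unified the two cases under one mechanism. Your argument buys two things: it is more elementary (no Macaulay bound, no Lefschetz machinery, no passage to $\gr_{\fm}$), and, as you observe, it never uses the Gorenstein hypothesis, so it actually proves the stronger statement that every stretched artinian local ring is exact. The only cosmetic caveat is the degenerate case $\fm=0$, which is implicitly excluded here just as in Lemma \ref{m2}.
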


\begin{proof}
We are going to use the associated graded ring   $\gr_{\fm}(R)$. This is a graded ring and is of zero dimension (see \cite[Theorem 13.9]{Mat}). Keep the notation be as of  Notation \ref{not}. By Macaulay's theorem  $h_i=1$ for all  $2\leq i\leq \ell$
(Sally  \cite[Theorem 1.1(ii)]{sal2} proved this without any use of Macaulay's theorem).
Thus,  $\HH(R)=(1,n,1,\ldots,1)$. Any nonzero map
from (resp. to) a 1-dimensional vector space is injective (resp. surjective). Conclude by this that  $\gr_{\fm}(R)$ has a weak Lefschtez element.
In the light of Discussion  \ref{dis}, $\rr(\gr_{\fm}(R))=n$. By  Fact \ref{low}(iv), $\rr(R)\leq \rr(\gr_{\fm}(R))$. Therefore,
$$n=\mu(\fm)\leq\D(R)\leq\rr(R)\leq\rr(\gr_{\fm}(R))=n.$$So, $R$ is exact.
\end{proof}

\begin{proposition}\label{als}
Any almost stretched Gorenstein algebra $(A,\fm)$ over an algebraically closed field of characteristic $0$ is exact.
\end{proposition}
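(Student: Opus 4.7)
The plan is to follow exactly the strategy used for Proposition \ref{st}: pass to the associated graded ring $\gr_\fm(A)$, determine its Hilbert function, exhibit a weak Lefschetz element, and then chain the inequalities supplied by Fact \ref{low}(iv) and Discussion \ref{dis}(iii,iv).

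First, I would recall that by definition an almost stretched algebra has $\mu(\fm^2)=2$. Standard results (due to Elias--Valla, building on Sally) show that a Gorenstein almost stretched algebra has a unimodal Hilbert function of the shape $\HH(A)=(1,n,2,2,\ldots,2,1,\ldots,1)$, where the tail of $1$'s is forced by Macaulay's theorem applied after the $\mathfrak{m}^2/\mathfrak{m}^3$ level, and the overall symmetric shape reflects the Gorenstein property. In particular $\sup\{h_i\}=n=\emb(A)=\mu(\fm)$ as soon as $n\geq 2$, and the sequence is unimodal, so that Discussion \ref{dis}(iii) will be applicable once a Lefschetz element is produced.

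Next, and this is the crux, I would invoke the theorem (Elias--Iarrobino--Valla) that under the hypotheses of characteristic zero and algebraically closed residue field, $\gr_\fm(A)$ has the weak Lefschetz property. The idea is to choose a generic linear form $\ell\in \gr_\fm(A)_1$: algebraic closedness supplies enough candidates and characteristic zero ensures that a generic choice makes every multiplication map $\ell\colon \gr_\fm(A)_i\to \gr_\fm(A)_{i+1}$ of maximal rank (the places where one needs injectivity or surjectivity between $2$-dimensional, or between $2$- and $1$-dimensional, components are exactly where a generic linear form succeeds). Once this is in hand, Discussion \ref{dis}(iii,iv) gives $\rr(\gr_\fm(A))=\sup\{h_i\}=n$, and Fact \ref{low}(iv) yields $\rr(A)\leq \rr(\gr_\fm(A))=n$. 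Chaining with the basic inequality $\mu(\fm)\leq \D(A)\leq \rr(A)$ gives $n\leq \D(A)\leq \rr(A)\leq n$, so $\D(A)=\rr(A)$ and $A$ is exact.

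The main obstacle is the weak Lefschetz step. In the stretched case the Hilbert function $(1,n,1,\ldots,1)$ reduces everything to linear maps between one-dimensional spaces, where any nonzero map is an isomorphism, so no characteristic hypothesis is needed. In the almost stretched case the maps between the two-dimensional components $\gr_\fm(A)_i\to\gr_\fm(A)_{i+1}$ must be shown to have full rank, and it is precisely here that genericity, and hence both algebraic closedness and characteristic zero, is essential; without them generic choices of $\ell$ may fail to exist or to give the maximal rank property, and the chain of inequalities above would break down.
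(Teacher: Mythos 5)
Your chain of inequalities at the end is sound, but the step you yourself flag as the crux --- that $\gr_{\fm}(A)$ has the weak Lefschetz property --- is exactly the point that is neither proved nor properly referenced, and the justification offered for it does not work. Choosing a generic linear form only shows that \emph{if} some $\ell$ achieves maximal rank in every degree, then a generic one does; genericity by itself does not produce such an $\ell$. Characteristic zero plus algebraic closedness do not force the weak Lefschetz property: there are standard artinian Gorenstein algebras over $\mathbb{C}$, already in embedding dimension $4$ (Ikeda's example with Hilbert function $(1,4,10,10,4,1)$), for which every linear form fails to have maximal rank. I also cannot identify an Elias--Iarrobino--Valla theorem asserting the WLP for associated graded rings of almost stretched Gorenstein algebras in the form you need; the claim may well be true for this class, but as written your argument for it is an assertion, not a proof. (A smaller slip: the Hilbert function of a \emph{local} artinian Gorenstein ring need not be symmetric, so the ``symmetric shape'' remark is not available; fortunately Discussion \ref{dis}(iii) only needs unimodality, and $(1,n,2,\ldots,2,1,\ldots,1)$ is unimodal.)

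The paper closes this gap by an explicit computation rather than Lefschetz machinery. After reducing to $n:=\emb(A)\geq 3$ via Fact \ref{low}, it invokes the Elias--Valla structure theorem (this is where algebraic closedness and characteristic $0$ enter), which provides generators $y_1,\ldots,y_n$ of $\fm$ and explicit relations forcing $\fm^2=(y_1^2,y_1y_2)$. The element $\xi:=y_1+y_3$ then satisfies $y_1\xi=y_1^2$ and $y_2\xi=y_1y_2$, hence $\fm\xi=\fm^2$, and $(0:\xi)\subset\fm$; Fact \ref{main} (with $M=A$, $N=\fm$, $a=\xi$) yields $\ell(A/\xi A)=\mu(\fm)=n$, and $n\leq \D(A)\leq\rr(A)\leq n$ closes the argument --- no associated graded ring and no Lefschetz theory. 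To salvage your route you would in any case have to run essentially the same Elias--Valla computation on initial forms to exhibit a Lefschetz element, so the detour through $\gr_{\fm}(A)$ buys nothing here; the missing WLP verification is the real content.
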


\begin{proof}
Due to Fact \ref{low} we  may assume that $n:=\emb(A)\geq3$.
Let us clarify the assumption: There is  a regular local ring $(R,\fn)$ such that $A = R/I$. By the almost stretched
we mean that $\mu(\fm^2)=2$. Then $\HH_A(i)=2$  for $2\leq i\leq t$ and $\HH_A(i)=1$ for all $t+1\leq i\leq s$.
We recall the following classification result of Elias and Valla (see \cite[Theorem 4.1]{ev}):
There is a  basis
$Y_1,\ldots,Y_n$ of $\fn$ and $b\in R$ such that $I$ is generated by the following four classes of functions:

\begin{enumerate}
\item[i)] $Y_iY_j$ with $1\leq i < j \leq n$ and $(i, j)\neq (1, 2)$,
\item[ii)] $Y^2_j - d_jY^s_1$ with $3\leq j \leq n$,
\item[iii)] $Y^2_2 - b Y_1 Y_2 -cY^{s-t+1}_1$,
\item[iv)] $Y^t_1Y^2$,
\end{enumerate}where $\{d_j,c\}$ are invertible.
Note that $\fm^2=(y_iy_j)_{i,j}$. From the first three relations we see that $\fm^2=(y_1^2,y_1y_2)$. Let $\xi:=y_1+y_3$. Then $y_1\xi=y_1^2$ and $y_2\xi=y_1y_2$.
Thus, $\fm\xi=\fm^2$. Clearly, $(0:\xi)\subset\fm$.
In the light of Fact  \ref{main}, $\ell(A/ \xi A)=\mu(\fm)=n$. Therefore, $n\leq\D(A)\leq\rr(A)\leq n$, which is what we want to prove.
\end{proof}

\begin{corollary}\label{77}
Let $(R,\fm,k)$ be a Gorenstein artinian local  $k$-algebra, where $k=\overline{k}$ is of zero-characteristic. If $\ell(R)\leq 7$, then $R$ is exact.
\end{corollary}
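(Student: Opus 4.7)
The plan is a case analysis on the embedding dimension $\emb(R)=h_1$, using the Hilbert function $\HH(R)=(1,h_1,\ldots,h_s)$ of Notation \ref{not}, where $s$ is the socle degree, so $\fm^s\neq 0=\fm^{s+1}$. Two elementary facts will be used repeatedly: first, $\mu(\fm^i)=h_i$ by Nakayama's lemma; second, since $R$ is Gorenstein, $\soc(R)$ is one-dimensional, and $\fm\cdot\fm^s=0$ forces $\fm^s\subseteq\soc(R)$, so $\fm^s=\soc(R)$ and hence $h_s=1$.

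For small embedding dimensions I would just quote Fact \ref{low}: if $\emb(R)\leq 1$, part (i) gives the claim, and if $\emb(R)=2$, part (ii) applies since $k=\overline{k}$ is infinite. So assume $\emb(R)\geq 3$. Combining $\ell(R)=1+h_1+h_2+\cdots+h_s\leq 7$ with $h_1\geq 3$ and $h_s=1$ yields
\[
h_2+h_3+\cdots+h_{s-1}\ \leq\ 2.
\]
I would then split cases on $h_2=\mu(\fm^2)$. If $h_2=0$, then $\fm^2=0$ and Lemma \ref{m2} finishes the case. If $h_2=1$, then $\fm^2$ is principal, so $R$ is stretched and Proposition \ref{st} applies. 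The case $h_2\geq 3$ is impossible: if $s=2$ then $h_s=h_2\geq 3$ contradicts $h_s=1$, while if $s\geq 3$ the displayed bound already fails. Finally, if $h_2=2$, then $s=2$ is excluded by $h_s=1$, and $s\geq 4$ is excluded because the displayed bound forces $h_3=0$, whence Nakayama gives $\fm^3=0$ and so $s\leq 2$, a contradiction. Hence $s=3$, the Hilbert function is $(1,h_1,2,1)$, and $R$ is almost stretched; Proposition \ref{als} closes this case using the hypothesis that $k=\overline{k}$ is of characteristic zero.

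I do not anticipate a serious obstacle. The only slightly delicate point is the $h_2=2$ subcase, where the Gorenstein constraint $h_s=1$ has to be combined with a short Nakayama argument to pin down the Hilbert function as $(1,h_1,2,1)$, so that the almost stretched hypothesis of Proposition \ref{als} becomes available; once the enumeration of possible Hilbert functions is organized in this way, every shape is covered by one of Fact \ref{low}, Lemma \ref{m2}, Proposition \ref{st}, or Proposition \ref{als}.
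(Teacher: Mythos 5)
Your proof is correct, and all the load-bearing ingredients are the ones the paper also uses (Fact \ref{low}, Lemma \ref{m2}, Proposition \ref{st}, Proposition \ref{als}, and the Gorenstein constraint $h_s=1$); what differs is the case decomposition. The paper branches on $\ell(R)$ and then on $\mu(\fm)$ via Lemma \ref{pro}, and as a result it must dispose of the shapes $(1,4,1,1)$ and $(1,3,1,1,1)$ by a direct elementary argument (choosing $x,y,z$ with $xyz\neq 0$ and running the strict chain $\ell(R/(x))<\ell(R/(xy))<\ell(R/(xyz))\leq 6$ to force $\ell(R/(x))=\mu(\fm)$), and it invokes Macaulay's theorem separately to kill $(1,3,1,2)$ and the Gorenstein socle condition to kill $(1,3,2)$. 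You instead branch on $h_2=\mu(\fm^2)$, which routes every $h_2=1$ shape uniformly into the stretched case; this absorbs the paper's two hand-made subcases and the $(1,3,1,2)$ exclusion into Proposition \ref{st} (whose proof is where Macaulay's theorem actually lives), at the cost of leaning on that machinery where the paper's chain argument is self-contained and, as the paper notes, does not even need the Gorenstein hypothesis when $\mu(\fm)=4$. Your bookkeeping in the $h_2=2$ subcase (excluding $s=2$ by $h_s=1$ and $s\geq 4$ by Nakayama) is sound, and the resulting shape $(1,3,2,1)$ is exactly where both proofs converge on Proposition \ref{als} and hence on the hypothesis $k=\overline{k}$, $\Char k=0$. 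The only point worth flagging is cosmetic: your inequality $h_2+\cdots+h_{s-1}\leq 2$ presupposes $s\geq 2$, but the degenerate case $s\leq 1$ is already covered by your $h_2=0$ branch, so nothing is lost.
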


\begin{proof}
First,  assume that $\ell(R)= 6$. In this case $\mu(\fm)\leq3$ (see Lemma \ref{pro}). The case $\mu(\fm)\leq2$ is in Fact \ref{low}.
Then $\mu(\fm)=\ell(\fm/ \fm^2)=3$. In view of  $0\to \fm^2 \to \fm\to \fm/\fm^2\to 0$, we observe $$\ell(\fm^2)=\ell(\fm)-3=5-3=2.$$ Either $\mu(\fm^2)=1$ or $\mu(\fm^2)=2$.  Since $R$ is Gorenstein,
$(1,3,2)$ is not admissible. Conclude by this that $\HH(R)=(1,3,1,1)$, i.e., $R$ is stretched. So, $R$ is exact.
Without loss of the generality we can assume that $\ell(R)= 7$. In view of Proposition \ref{3} we may and do assume that $\fm^3\neq 0$. Also, in the light  of Lemma \ref{pro} we may assume that
$\mu(\fm)\leq 4$. There are the following two possibilities:

\begin{enumerate}
\item[i)] One has $\mu(\fm)=4$ (in this case the claim holds without Gorenstein assumption). 	
Indeed, there are $x,y$ and $z$ in $\fm$ such that $xyz\neq0$. Note that
$\ell(R/(xyz))\leq6$. By Nakayama, $xyR/xyzR\neq 0$. Since $0\to xyR/xyzR\to R/xyzR\to R/xyR\to 0$, we have $\ell(R/(xy))<\ell(R/(xyz))\leq6$.
Similarly, $$\ell(R/(x))<\ell(R/(xy))<\ell(R/(xyz))\leq6.$$ Hence, $\ell(R/(x))\leq 4$. Recall that $4\leq\mu(\fm)\leq\ell(R/(x))\leq 4$, i.e., $R$ is exact.

\item[ii)] One has $\mu(\fm)\leq3$. Indeed, by Fact \ref{low}  $\mu(\fm)=3$.
Suppose that $\HH(R):=(1,3,1,1,1)$.  Let us check this case directly. Let $x,y,z,w$ be such that $xyzw\neq 0$ (note that $\fm^4\neq 0$). We deduce from $$\ell(R/(x))<\ell(R/(xy))<\ell(R/(xyz))<\ell(R/(xyzw))\leq6$$
that $\ell(R/(x))\leq 3$. Recall that $3\leq\mu(\fm)\leq\ell(R/(x))\leq 3$. Therefore, $R$ is exact.
By Macaulay's theorem, $(1,3,1,2)$ is not admissible.
The only remaining case is $\HH(R)=(1,3,2,1)$. By definition, $R$ is almost stretched. So, $R$ is exact.
\end{enumerate}
\end{proof}

\begin{remark}
Recall that $(1,3,1,2)$ is not admissible.  Here, we derive it as an application of the theory of exact rings. Suppose it is an $O$-sequence of  a ring $R$. Let $A:=\gr_{\fm}(R)$ and let $x\in A_1$ be such that $x A_2\neq 0$.
We can pick this, because $A_3\neq0$. Note that
$xA=xA_0\bigoplus x A_1\bigoplus  xA_2$.  Clearly, $\dim_k(xA_0)=1$. We see $0\neq xA_1\subset A_2$. Combine this along with  $\dim_k(A_2)=1$
to observe that $\dim_k(xA_1)=1$. Since $0\neq xA_2\subset A_3$ and that $\dim_k(A_2)=1$ we get that $\dim_k(xA_2)=1$.
We apply these to conclude that
$\dim_k(xA)=\sum_{i=0}^2\dim_k(xA_i)=1+1+1=3.$  We look at the exact sequence $0\to xA\to A\to A/xA\to 0$ to deduce
$\ell(A/xA)=\ell(A)-\ell(xA)=7-3=4.$ This proof shows that $x$ is a weak Lefschetz element. By Discussion  \ref{dis} $\ell(A/xA)=\sup\{\mu(A_i)\}=3$. This contradiction says that
$(1,3,1,2)$ is not admissible.
\end{remark}

The  ring presented in \S 2 was of length $8$. Despite of this we have:

\begin{corollary}\label{8}
Let $(R,\fm,k)$ be a Gorenstein artinian local  $k$-algebra, where $k=\overline{k}$ is of zero-characteristic. If $\ell(R)= 8$, then $R$ is exact.
\end{corollary}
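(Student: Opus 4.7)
The plan is to adapt Corollary~\ref{77}: enumerate the admissible Hilbert functions of $R$ and handle each by one of three methods---a chain-of-quotients length count, Proposition~\ref{als} (almost stretched), or a weak-Lefschetz argument on $\gr_\fm(R)$. Lemma~\ref{pro} restricts us to $\mu(\fm)\le 5$, Fact~\ref{low}(i)--(ii) gives $\mu(\fm)\ge 3$, and Proposition~\ref{3} lets us assume $\fm^3\ne 0$. Since $R$ is Gorenstein, $h_s=\dim_k\soc(R)=1$; Macaulay's bounds together with $\sum h_i=8$ then leave only the six Hilbert functions
\[
(1,5,1,1),\ (1,4,2,1),\ (1,4,1,1,1),\ (1,3,3,1),\ (1,3,2,1,1),\ (1,3,1,1,1,1).
\]

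Whenever the socle degree $s$ satisfies $s\ge 8-\mu(\fm)$, the chain-of-quotients argument from the proof of Corollary~\ref{77}, applied to $x_1,\ldots,x_s\in\fm$ with $x_1\cdots x_s\ne 0$, produces $s-1$ strict length drops and hence $\ell(R/x_1R)\le\ell(R)-s\le\mu(\fm)$; combined with $\mu(\fm)\le\D(R)\le\rr(R)\le\ell(R/x_1R)$ this forces exactness. The numerics settle $(1,5,1,1)$, $(1,4,1,1,1)$, and $(1,3,1,1,1,1)$. In the two cases $(1,4,2,1)$ and $(1,3,2,1,1)$ one has $\mu(\fm^2)=h_2=2$, so $R$ is almost stretched and Proposition~\ref{als} applies.

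The only residual case is $H(R)=(1,3,3,1)$, for which I would pass to $A:=\gr_\fm(R)$. Gorenstein duality on $R$, $\ell(0:_RI)=\ell(R)-\ell(I)$, together with the colon identity $I:J=0:_RJ(0:_RI)$ (valid since ideals in a Gorenstein artinian ring are reflexive), gives
\[
(\fm^3:\fm)=0:_R\fm(0:_R\fm^3)=0:_R\fm^2=\fm^2,
\]
using $0:_R\fm^3=\fm$ and $\ell(0:_R\fm^2)=4=\ell(\fm^2)$ with $\fm^2\cdot\fm^2=\fm^4=0$. This rules out any graded socle of $A$ in degree~$1$, and degree~$2$ is ruled out directly, since an element of $\fm^2\setminus\fm^3$ annihilated by $\fm$ would lie in $\soc R=\fm^3$. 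Hence $A$ itself is Gorenstein with $H(A)=(1,3,3,1)$, a compressed codimension-three Gorenstein algebra. By Macaulay's inverse system, $A\cong k[x_1,x_2,x_3]/\Ann(F)$ for some cubic $F\in k[X_1,X_2,X_3]$ depending genuinely on all three variables (else $\emb(A)<3$); Gordan--Noether then yields $\mathrm{Hess}(F)\not\equiv 0$, and Watanabe's Hessian criterion supplies a weak Lefschetz element $\bar\ell\in A_1$ with $\bar\ell\colon A_1\to A_2$ an isomorphism and $\bar\ell\colon A_2\to A_3$ a surjection.

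Lift $\bar\ell$ to $\ell\in\fm\setminus\fm^2$. Surjectivity of $A_2\to A_3$, together with $\fm^4=0$, gives $\ell\fm^2=\fm^3=\fm\cdot\fm^2$. For $(0:_R\ell)\subseteq\fm^2$: any $r\in(0:_R\ell)$ lies in $\fm$ (else $\ell=0$); writing $r=r_1+r_2$ with $r_1$ a lift of $\bar r\in A_1$ and $r_2\in\fm^2$, the equation $0=\ell r\equiv\ell r_1\pmod{\fm^3}$ together with the injectivity of $\bar\ell$ on $A_1$ forces $\bar r_1=0$, hence $r\in\fm^2$. Applying Fact~\ref{main} with $M=R$, $N=\fm^2$, $a=\ell$ yields $\ell(R/\ell R)=\mu(\fm^2)=3$, so $\rr(R)\le 3$; combined with $\D(R)\ge\mu(\fm)=3$ and $\D(R)\le\rr(R)$ we conclude $\D(R)=\rr(R)=3$, i.e., $R$ is exact. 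The principal obstacle is establishing WLP for $(1,3,3,1)$ Gorenstein algebras, resolved cleanly through apolarity and the non-vanishing of the Hessian of a genuine ternary cubic.
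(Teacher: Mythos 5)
Your proposal is correct, and its overall architecture coincides with the paper's: reduce via Lemma \ref{pro}, Fact \ref{low} and Proposition \ref{3}, enumerate the admissible Hilbert functions, dispose of all but $(1,3,3,1)$, and settle that case through a weak Lefschetz element of $\gr_\fm(R)$. The differences lie in the justifications, and they are genuine. For the stretched profiles $(1,5,1,1)$, $(1,4,1,1,1)$, $(1,3,1,1,1,1)$ the paper invokes Proposition \ref{st} (Macaulay plus the associated graded ring), whereas you use the elementary chain $\ell(R/x_1R)<\ell(R/x_1x_2R)<\cdots\le\ell(R)-1$, which is self-contained and, as you note, needs no Gorenstein hypothesis; your list is also slightly cleaner than the paper's, which omits $(1,3,1,1,1,1)$ (harmlessly, since it is stretched) while listing cases already excluded. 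In the $(1,3,3,1)$ case the paper cites Watanabe's theorem that a symmetric Hilbert function forces $\gr_\fm(R)$ to be Gorenstein and then quotes Migliore--Zanello for the WLP of codimension-three Gorenstein graded algebras with $h_3=1$; you instead verify Gorensteinness of $\gr_\fm(R)$ by the direct socle computation $(\fm^3:\fm)=0:_R\fm^2=\fm^2$ (valid, using reflexivity of ideals in an artinian Gorenstein ring), and you obtain the WLP through Macaulay's inverse system, Gordan--Noether/Hesse for ternary cubics, and Watanabe's Hessian criterion --- which is essentially a proof of the special case of the cited WLP result that is actually needed, legitimate under the standing hypotheses $k=\overline{k}$, $\operatorname{char}k=0$. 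Finally, you transfer the conclusion back to $R$ by verifying the hypotheses of Fact \ref{main} for a lift of the Lefschetz element ($(0:_R\ell)\subseteq\fm^2$ and $\ell\fm^2=\fm^3$), where the paper instead uses $\rr(R)\le\rr(\gr_\fm(R))$ from Fact \ref{low}(iv); both closings are correct, and yours has the mild advantage of exhibiting an explicit element $\ell$ with $\ell(R/\ell R)=3$. I see no gap.
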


\begin{proof} 
By Fact \ref{low} we may assume $\mu(\fm)>2$. In view of Proposition \ref{3} we can assume that $\fm^3\neq 0$. Also, in the light  of Lemma \ref{pro} we may assume that
$\mu(\fm)\leq 5$. Since $R$ is Gorenstein, the final coordinate of its  Hilbert function is one. The following are the possible shape of Hilbert functions:
$$\{(1,6,1),(1,5,1,1),(1,4,2,1),(1,4,1,1,1),(1,3,2,1,1),(1,3,3,1),(1,2,\ldots,1)\}.$$
 The  stretched and  almost stretched cases were handled in Proposition \ref{st} and \ref{als}. Then we may assume that $\HH(R)=(1,3,3,1)$.
We are going to apply the associated graded ring   $\gr_{\fm}(R)$. This is a graded ring with Hilbert function $(1,3,3,1)$.
 Watanabe   proved that if the Hilbert function of a  local ring  is symmetric then the associated graded ring $\gr_{\fm}(-)$ is  Gorenstein (see \cite[Proposition
9]{wat2}).   The assumptions
	imply that $\gr_{\fm}(R)\simeq k[x,y,z]/ I$ where $I$  is Gorenstein, graded and of height three.
	Since  $h_3(k[x,y,z]/ I)=1$, $\gr_{\fm}(R)$ has weak Lefschetz property (see \cite[Corollary 3.7]{Mig}). Combine this along with Discussion \ref{dis}(iii) to observe that $$\rr(\gr_{\fm}(R))=D(\gr_{\fm}(R))=\sup\{h_i(\gr_{\fm}(R))\}=3.$$ We apply   Fact \ref{low}(iv) to deduce $\rr(R)\leq \rr(\gr_{\fm}(R))=3$. Recall that $$3=\mu(\fm)\leq\D(R)\leq\rr(R)\leq 3.$$  So, $\D(R)=\rr(R)$.
The proof is now complete.
\end{proof}

The non-exact ring presented in Example \ref{e9m} is of length 9 and it contains $\mathbb{C}$ (it is not  Gorenstein). Despite of this, there is a complete list of artinian Gorenstein $k$-algebra of length $9$. The possible
Hilbert functions are $$\{  (1,2,\ldots,1),(1,3,2,2,1),(1,3,3,1,1),(1,4,3,1),(1,4,2,1,1),(1,5,2,1),
(1,5,1,1,1),(1,6,1,1),(1,7,1)\}.$$The cases $\fm^2=0$, stretched, and almost stretched are exact. Thus, the only nontrivial
cases are $(1,3,3,1,1)$ and $(1,4,3,1)$.

\begin{fact}\label{93}(See \cite[\S 2]{ca})
Let $(R,\fm,k)$ be a Gorenstein artinian local  $k$-algebra, where $k$ is of zero-characteristic. If $\HH(R)=(1,3,3,1,1)$, then $R:=\frac{k[x_,x_2,x_3]}{I}$ where $I$ is of the form:
\begin{enumerate}
\item[i)]  $(x_1x_2 +x^2_3,x_1x_3,x_2^2 -tx^2_3 -x^{\ell}_1)$
\item[ii)] $ (x_1x_2,x_1x_3,x_2x_3,x^3_2 -x_1^{\ell},x^3_3 -x_1^{\ell})$
\item[iii)] $(x_1x_2,x_2x_3,x^2_1,x_1x^2_3 -x_2^{\ell},x^3_3-x_2^{\ell})$
\item[iv)] $(x_1x_2,x_2x_3,x^2_1 -x^{\ell-1}_3,x_1x^2_3,x^3_2 -x_3^{\ell})$
\item[v)] $(x_1x_2 -x^{\ell-1}_3 ,2x_1x_3 +x^2_2,x^2_1,x_1x^2_3,x_2x^2_3)$
\item[vi)] $(x_1x_2,x^2_1 -x^{\ell-1}_3,x^2_2 -x^{\ell-1}_3,x_1x^2_3,x_2x^2_3 )$.
\end{enumerate}
\end{fact}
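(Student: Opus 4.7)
The plan is to invoke Macaulay's inverse system duality, which in characteristic zero puts Gorenstein artinian quotients $S/I$ of $S:=k[x_1,x_2,x_3]$ with a given socle degree $s$ into bijection with nonzero polynomials $F\in k[y_1,y_2,y_3]$ of degree $s$, up to scalars, where $S$ acts on $k[y_1,y_2,y_3]$ by partial differentiation. Under this correspondence $I=\Ann_S(F)$, and the Hilbert function of $S/I$ equals the dimension sequence of the cyclic module generated by $F$ under the contraction action. Since $\HH(R)=(1,3,3,1,1)$, the task reduces to classifying, up to the action of $GL_3(k)$ on linear forms, those $F$ of degree $4$ in three variables whose dimension sequence is $(1,3,3,1,1)$.

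First, I would use the $GL_3(k)$-action to put $F$ in a normal form. The constraint $\HH_R(2)=3$ forces the space of second partial derivatives $\langle \partial_i\partial_j F\rangle$ to be exactly $3$-dimensional inside the $6$-dimensional space of quadrics in $y_1,y_2,y_3$, and this subspace is the main geometric datum driving the case split. Within each projective type of codimension-three linear system of quadrics, I would diagonalize or otherwise standardize the top-degree part of $F$, and then normalize the lower-degree tail of $F$ (which encodes the tail $(1,1)$ of the Hilbert function and produces the terms like $x_1^{\ell}$ and $x_3^{\ell-1}$ in the stated list). The integer $\ell$ would appear as the degree bound describing how far the sub-leading terms of $F$ extend before being absorbed into the tail, and the scalar $t$ in case (i) would index a genuine continuous family of inequivalent normal forms.

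For each such normal form of $F$, one then reads off $\Ann_S(F)$ explicitly: its quadratic generators are exactly those $q\in S_2$ for which $q\cdot F=0$, and its cubic and quartic generators come from the differential relations that annihilate the lower-degree tail of $F$. This is essentially bookkeeping and produces the six families (i)--(vi).

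The main obstacle is completeness: showing no further normal forms exist and that (i)--(vi) are pairwise non-isomorphic. A clean approach combines the Buchsbaum--Eisenbud structure theorem for codimension three Gorenstein ideals (which represents $I$ as the pfaffians of a skew-symmetric matrix of odd size) with a careful enumeration of $3$-dimensional linear systems of quadrics in $3$ variables, distinguishing cases by residual invariants such as the base locus of $\langle\partial_i\partial_jF\rangle$ and the degree of the unique socle element in the tail. The hypotheses $k=\overline{k}$ and $\Char k=0$ are used precisely to allow diagonalization and completion of squares in each normalization step.
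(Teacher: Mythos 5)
The paper does not prove this statement at all: it is imported verbatim as a ``Fact'' from Casnati \cite[\S 2]{ca}, so there is no internal argument to compare yours against. Your outline is in the same spirit as what the cited source actually does (Macaulay inverse systems plus normalization under automorphisms), but as written it is a plan rather than a proof. The entire content of the statement is the explicit, exhaustive enumeration of the six families, and that is precisely the part you defer: ``this is essentially bookkeeping'' and ``the main obstacle is completeness\dots a clean approach combines\dots'' name the hard step without carrying it out. Neither the derivation of the six normal forms, nor the verification that the list is complete, nor the check that the families are pairwise non-isomorphic is actually performed, so the claim is not established.

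There is also a technical misstep in the setup. The Hilbert function $(1,3,3,1,1)$ is not symmetric, so $R$ cannot be graded and the apolar polynomial $F$ cannot be homogeneous; consequently the cyclic $S$-module generated by $F$ under contraction carries no natural grading, and your assertion that $\HH(R)$ ``equals the dimension sequence'' of that module needs to be replaced by Iarrobino's symmetric decomposition of the Hilbert function (here $(1,1,1,1,1)+(0,2,2,0,0)$, which already explains the $x_1^{\ell}$-type tails in the list). Relatedly, classifying $F$ up to $GL_3(k)$ acting on linear forms is not enough in the local case: isomorphism of the algebras corresponds to the action of the full automorphism group of the power series ring (together with adding lower-order derivatives of $F$), and quotienting by the nonlinear part of that action is where much of the case analysis in \cite{ca} lives. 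So the strategy is the right one, but the argument as given has a genuine gap at exactly the point where the theorem's content resides.
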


\begin{example}
Let $R:=\frac{k[x_,x_2,x_3]}{(x_1x_2,x_2x_3,x^2_1,x_1x^2_3 -x_2^{\ell},x^3_3-x_2^{\ell})}$. Then $R$ is exact.
\end{example}

\begin{proof}
It is easy to see that $\fm^2=(x_2^2,x_3^2,x_1x_3)$. Set $\xi:=x_1+x_2+x_3$. Then
$$\fm\xi=(x_1\xi,x_2\xi,x_3\xi)=(x_1x_3,x_2^2,x_3x_1+x_3^2)=(x_2^2,x_3^2,x_1x_3)=\fm^2.$$
In view of Fact \ref{main}, $R$ is exact.
\end{proof}

\begin{fact}\label{94}(See \cite[\S 5]{ca})
Let $(R,\fm,k)$ be a Gorenstein artinian local  $k$-algebra, where $k$ is of zero-characteristic. If $\HH(R)=(1,4,3,1)$, then $R:=\frac{k[x_,x_2,x_3,x_4]}{I}$ where $I$ is of the form:
\begin{enumerate}
\item[i)]  $ \left(x_1x_2 +x^2_3,x_1x_3,x^2_2 -ax^2_3-x_1^{\ell-1},x_ix_4,x_4^2-x_1^{3}\right)_{i<4}$ for some $a\in k$,
\item[ii)] $\left(x^2_1,x^2_2,x^2_3 +2x_1x_2,x_ix_4,x_4^2 -x_1x_2x_3\right)_{i<4}$
\item[iii)] $\left(x^2_1,x^2_2,x^2_3,x_ix_4,x^2_4 -x_1x_2x_3\right)_{i<4}$
\item[iv)] $\left(x_1x_2,x_1x_3,x_2x_3,x^3_2 -x^3_1,x^3_3 -x^3_1,x_ix_4,x^2_4-x^3_1\right)_{i<4}$
\item[v)] $\left(x^2_1,x_1x_2,x_2x_3,x^3_2 -x^3_3,x_1x^2_3 -x^3_3,x_ix_4,x^2_4 -x^3_3\right)_{i<4}$
\item[vi)] $\left(x^2_1,x_1x_2,2x_1x_3 +x^2_2,x^3_3,x_2x^2_3,x_ix_4,x^2_4 -x_1x^2_3\right)_{i<4}$.
\end{enumerate}
\end{fact}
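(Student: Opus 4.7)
The plan is to invoke Macaulay's theory of inverse systems for artinian Gorenstein local $k$-algebras in characteristic zero. Any such $R$ with $\emb(R)=n$ and socle degree $s$ can be presented as $R\cong k[x_1,\ldots,x_n]/\Ann_{k[x]}(F)$, where $F\in k[y_1,\ldots,y_n]$ is a homogeneous polynomial of degree $s$ and $k[x]$ acts on $k[y]$ by partial differentiation. For $\HH(R)=(1,4,3,1)$, this forces $n=4$, $s=3$, and $F$ to be a cubic form in four variables. The Hilbert function is read from $F$ via $h_i=\dim_k(k[x]_i\cdot F)$: the condition $h_2=3$ translates into saying that the four partials $\partial F/\partial y_i$ span only a three-dimensional subspace of linear forms (equivalently, the first catalecticant of $F$ has rank three), while $h_1=4$ says that their second partials span all linear forms.

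Next I would classify these cubics up to the action of $\mathrm{GL}_4(k)$. A single linear relation $\sum c_i\partial_iF=0$ can be normalized by a coordinate change so that $y_4$ is the \emph{dependent direction}, after which one splits into cases according to how $y_4$ appears in the surviving part of $F$: it may couple quadratically to the remaining variables via a nondegenerate pairing; it may enter only through the socle so that $F$ is essentially a cubic in three variables plus a term pinning down $x_4^2$; or it may enter through more degenerate couplings. For each sub-case one applies the classification of cubic forms in three variables over the algebraically closed field $k$ and then reads off the annihilator $I=\Ann_{k[x]}(F)$ by listing the monomial differential operators in $x_1,\ldots,x_4$ that kill $F$.

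Computing $I$ is mechanical once the normal form of $F$ is in hand: the quadratic generators record the pairs $y_iy_j$ that are absent from the second partials of $F$, while the cubic generators encode linear dependencies among third-order partials and identify the socle generator. The relations $x_ix_4$ (for $i<4$) appearing in every case reflect that $y_4$ pairs with the other variables only through the socle. The main obstacle is the orbit analysis itself: establishing exhaustiveness and pairwise non-isomorphism of the six families requires a careful $\mathrm{GL}_4(k)$-orbit classification of rank-three catalecticants of ternary-plus-one cubics. I would separate the orbits by invariants such as the rank of the quadratic form induced on $\fm/\fm^2$ by pairing against a socle generator, the Jordan type of multiplication by a generic linear form, and the singularities of the projective cubic cut out by $F$; following \cite{ca}, these invariants single out exactly the six listed families.
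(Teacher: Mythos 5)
First, note that the paper does not prove this statement at all: Fact \ref{94} is imported verbatim from Casnati \cite[\S 5]{ca} and is used as a black box, so there is no internal proof to compare against. Your proposal is therefore judged on its own merits, and it has a fatal flaw at the very first step. You reduce to a \emph{homogeneous} cubic $F$ in four dual variables via Macaulay's correspondence. But a graded artinian Gorenstein algebra has a symmetric Hilbert function (the multiplication pairing $A_i\times A_{s-i}\to A_s\cong k$ is perfect), and $(1,4,3,1)$ is not symmetric since $h_1=4\neq 3=h_2$. Equivalently, for a homogeneous cubic the catalecticant matrices in degrees $1$ and $2$ are transposes of one another, so $h_1=h_2$ automatically; there is no homogeneous cubic in four variables whose apolar algebra has Hilbert function $(1,4,3,1)$. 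Your framework classifies the empty set, and every algebra in the six listed families is genuinely non-graded --- which is visible directly from the relations $x_4^2-x_1^3$, $x_4^2-x_1x_2x_3$, etc., equating elements of different degrees. (You have also swapped the two catalecticants: $h_1$ counts independent first partials of $F$, which are quadrics, while $h_2$ counts independent second partials, which are linear forms.)

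The correct route, and the one Casnati follows, uses the \emph{non-homogeneous} Macaulay inverse system: $R\cong k[x_1,\dots,x_4]/\Ann(F)$ with $F$ a degree-$3$ polynomial having essential lower-degree terms, organized by Iarrobino's symmetric decomposition of the Hilbert function of $\gr_{\fm}(R)$. Here $(1,4,3,1)=(1,3,3,1)+(0,1,0,0)$, so the leading cubic form of $F$ involves only three of the dual variables (and one classifies it using the ternary-cubic orbit analysis you allude to), while the fourth variable enters only through a quadratic term whose contribution is the summand $(0,1,0,0)$; this is exactly why every family in the list contains the relations $x_ix_4$ for $i<4$ together with $x_4^2$ minus a socle element. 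Your closing intuition about $y_4$ ``pairing only through the socle'' is pointing at this phenomenon, but the homogeneous setup you commit to cannot express it, so the argument as written cannot be repaired without replacing its foundation.
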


\begin{example}
Let $R:=\frac{k[x_,x_2,x_3,x_4]}{(x^2_1,x^2_2,x^2_3,x_ix_4,x^2_4 -x_1x_2x_3)}$. Then $R$ is exact provided $\Char (k)\neq 2$.
\end{example}

\begin{proof}
It is easy to see that $\fm^2=(x_1x_2,x_1x_3,x_2x_3)$.
Set $\xi:=x_1+x_2+x_3$. Then $\fm\xi:=(x_1x_2+x_1x_3,x_1x_2+x_2x_3,x_1x_3+x_2x_3)$. Since $\Char (k)\neq 2,$ 
$$x_1x_2=\frac{1}{2}(x_1x_2+x_1x_3)+\frac{1}{2}(x_1x_2+x_2x_3)-\frac{1}{2}(x_1x_3+x_2x_3).$$
Thus, $x_1x_2\in\fm\xi$. Therefore, $\fm\xi=(x_1x_2,x_1x_3,x_2x_3)=\fm^2$.\footnote{If $\Char (k)=2$, then $\fm\xi\subsetneqq\fm^2$. Indeed, $x_1x_2+x_1x_3=(x_1x_2+x_2x_3)+(x_1x_3+x_2x_3)$. Therefore,
$\dim_{k}(\fm\xi)_2<3=\dim_{k}(\fm^2)_2$. From this $\fm\xi\subsetneqq\fm^2$.} In view of Fact \ref{main}, $R$ is exact.
\end{proof}

Here, we study exactness of Gorenstein rings with $\fm^4=0$  in some examples:

\begin{example} \label{ew1}
Let $A:=\mathbb{Q}[u, v, x, y,z]$ and let $I:=\left\{f\in A|f(\partial_u,\partial_v,\partial_x,\partial_y,\partial_z\right)(u^2x + uvy + v^2z)=0\}.$
Then $R:=A/I$ is an  exact Gorenstein ring.
\end{example}

\begin{proof}By a result of Watanabe the Hilbert series
of $R$ is $1+5t+5t^2+1$. Note that $$(2\partial_u\partial_y-\partial_v\partial_z)(u^2x + uvy + v^2z)=0.$$ Also,
$(\partial_u\partial_x-2\partial_v\partial_y)(u^2x + uvy + v^2z)=0$. These mean that $2uy-vz\in I$ and $ux-2vy\in I$.
Let $$J:=\langle z^2, yz, xz ,uz, y^2, xy ,2uy-vz, x^2, vx, ux-2vy, v^3, uv^2, u^2v, u^3\rangle.$$
It is easy to see that $J\subset I$ and that the Hilbert  series
of $A/J$ is $1+5t+5t^2+1$. We look at the exact sequence of graded modules $0\to I/J\to A/J\to R\to 0$. By using the additivity of Hilbert function,
$I/J=0$. In particular, $J= I$ and that $R$ is local.
Let $\fa:=( x, y, z, uv, u^2, v^2)$. Then $(0:_Ru)\subset \fa$ and $\fm \fa=u\fa$. In the light of Fact  \ref{main}, $\mu(\fa)=\ell(R/uR)$.
Therefore, $\D(R)=\rr(R)=6$.
\end{proof}

\begin{example} \label{ew}
Let $A:=k[X_1, \ldots, X_5]$ be the polynomial ring over any field $k$ of zero-characteristic.
Let $I$ be the ideal generated by the
following quadric relations:
$$\Delta:=\{X_1X_3 + X_2X_3, X_1X_4 + X_2X_4, X_3^2 + X_1X_5 - X_2X_5,
X^2_4 + X_1X_5 - X_2X_5, X^2_1, X_2^2
, X_3X_4, X_3X_5, X_4X_5, X_5^2\}.$$
Let $R := A/I$. The ring $R$ is   Gorenstein and $\fm^4=0$. The following holds:\begin{enumerate}
\item[i)] One has $\ell(R/\sum_{i=1}^5 x_i)\neq\inf\{\ell(R/rR)\}$.
\item[ii)] The ring $R$ is exact. In fact, there are infinitely many
$r$ such that $\ell(R/rR)=\rr(R)=\D(R)=5$.
\end{enumerate}
\end{example}

\begin{proof}
In view of \cite{ga}, $R$ is a local Gorenstein ring with Hilbert series $\HH(t) = 1+5t+5t^
2+t^3$. Hence $\fm^4=0$.
Also, as a $k$-vector space, it has a basis consisting of $$\Gamma:=\{
1, x_1, x_2, x_3, x_4, x_5, x_1x_2, x_1x_3, x_1x_4, x_1x_5, x_2x_5, x_1x_2x_5 \}.$$The relations
$x_3^2 + x_1x_5 - x_2x_5=
x^2_4 + x_1x_5 - x_2x_5=0$ implies that $x^2_3=x_4^2$ and $x^2_4 + x_1x_5 =x_2x_5$.
Let $$\xi:=x_1+Ax_2+Bx_3+Cx_4+Dx_5\in R_1$$ be an element.
We are going to force conditions for which $\fm\xi=\fm^2$.
Let us compute $x_i\xi:$
\begin{enumerate}
\item[1)] $x_1\xi=Ax_1x_2+Bx_1x_3+Cx_1x_4+Dx_1x_5$,
\item[2)] $x_2\xi =x_1x_2+Bx_2x_3+Cx_2x_4+Dx_2x_5=x_1x_2-Bx_1x_3-Cx_1x_4+Dx_1x_5+Dx_3^2$,
\item[3)] $x_3\xi=x_3x_1+Ax_2x_3+Bx_3^2=(1-A)x_3x_1+Bx_3^2$,
\item[4)]  $x_4\xi=x_4x_1+Ax_2x_4+Cx_4^2=(1-A)x_1x_4+Cx_3^2$,	
\item[5)] $x_5\xi=x_5x_1+Ax_2x_5=(A+1)x_1x_5+Ax_3^2$.
\end{enumerate}
 We now  are ready to prove the desired claims:

\begin{enumerate}
\item[i)] Let $A=B=C=D=1$. We have $\fm\sum_{i=1}^5 x_i\neq \fm^2$, because $\dim_k(\fm\sum_{i=1}^5 x_i)_2<5=\dim_k(\fm^2)_2$.

\item[ii)]  Let $A:=-1$. We claim that $\fm\xi=\fm^2$ for any nonzero $B$, $C$ and $D$. Indeed,
from $5)$ we get that $x_3^2\in \fm\xi$. In view of $4)$ and  $3)$ we see $\{x_3x_1,x_1x_4\}\subset\fm\xi$.
Look at
\[\begin{array}{ll}
x_1\xi+x_2\xi
=(A+1)x_1x_2+(2D)x_1x_5+Dx_3^2=(2D)x_1x_5+Dx_3^2.
\end{array}\]Conclude by this that  $x_1x_5\in\fm\xi$.
In the light of  $x_3^2 + x_1x_5 - x_2x_5=0$ we see $x_2x_5\in\fm\xi$. Since
$\fm^2=\langle x_1x_2,x_3x_1,x_1x_4,x_2x_5,x_1x_5\rangle$ we have
$\fm\xi=\fm^2$. This says that $\xi$ is a Lefschetz element. The Hilbert function is unimodal. Combine these along with  Discussion \ref{dis}(iii) to conclude
that $\rr(R)=5$. Recall that $5\leq\D(R)\leq\rr(R)=5$. \end{enumerate}This is what we want to prove.
\end{proof}

\section{Higher dimensional exactness }

Suppose $R$ is  homomorphic image of a Gorenstein ring $A$ of same dimension  as of $R$ (e.g. $R$ is complete). Let $M$ be a finitely generated $R$-module of dimension $d$. If $d=0$ set $\D_h(M):=\D(M)$.
Note that $\dim(\Ext^{d-i}_A(M,A))\leq i$. Inductively,  define the \textit{homological Dilworth number} of $M$ by $\D_h(M):=\sum_{i=0}^{d-1}(^{d-1}_{i})\D_h(\Ext^{d-i}_A(M,A))+\e(M)$.
This is independent of the choose of  $A$ (see \cite[Proof of 3.9(a)]{vas}).

\begin{fact}\label{G}(Gunston \cite{dil})
Adopt the above notation. If $\dim R=1$, then $\D(R)=\D_h(R)$.
\end{fact}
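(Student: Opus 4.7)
The plan is to unwind the recursive definition at $d = 1$ and match the result to $\D(R)$ via the canonical torsion decomposition of a one-dimensional ring. When $d = 1$ the sum has only one term, and since $\dim \Ext^1_A(R,A) \le 0$ the base case $\D_h = \D$ applies, so
\[
\D_h(R) = \D\bigl(\Ext^1_A(R,A)\bigr) + \e(R).
\]
The task is therefore to prove $\D(R) = \D(\Ext^1_A(R,A)) + \e(R)$.

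Set $T := H^0_\fm(R)$ and $\bar R := R/T$. Three ingredients drive the argument. First, local duality over the one-dimensional Gorenstein ring $A$ identifies $\Ext^1_A(R,A)$ with the Matlis dual $T^{\vee}$, so in particular $\ell(\Ext^1_A(R,A)) = \ell(T)$. Second, $\bar R$ is a one-dimensional Cohen--Macaulay local ring with $\e(\bar R) = \e(R)$, and for such rings $\D(\bar R) = \e(R)$: indeed $\D(\bar R) \le \rr(\bar R) = \e(R)$, while $\mu(\bar\fm^k) = \ell(\bar\fm^k / \bar\fm^{k+1}) = \e(R)$ for $k$ large enough that the Hilbert function of $\bar R$ has stabilized. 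Third, the Matlis-duality identity $\mu(N) = \ell(\soc(N^\vee))$ translates submodule-counts on $T$ into socle-counts on $T^\vee$ and vice versa.

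For the bound $\D(R) \le \D(\Ext^1_A(R,A)) + \e(R)$, each ideal $\fa$ of $R$ gives the sequence $0 \to \fa \cap T \to \fa \to \bar\fa \to 0$, whence $\mu(\fa) \le \mu(\fa \cap T) + \mu(\bar\fa) \le \D(T) + \e(R)$; the Matlis step then converts $\D(T)$ to $\D(T^\vee) = \D(\Ext^1_A(R,A))$. For the reverse inequality I would construct an ideal realising the sum: pick $N_0 \subseteq T$ with $\mu(N_0) = \D(T)$ and a superficial parameter $x \in \fm$ which annihilates $T$; then for $s$ large enough that $\fm^{s+1} = x\fm^s$, set $\fa := N_0 + x\fm^s$ and verify via Fact~\ref{main} applied over $\bar R$ that $\mu(\fa)$ splits additively as $\D(T) + \e(R)$.

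The main obstacle is the identification $\D(T) = \D(T^\vee)$, since $\D$ need not be invariant under Matlis duality for a general finite length module. In our situation $T$ is constrained: it is both a submodule of $R$ and the full $\fm$-torsion of that ring, a structural restriction that should force the invariance (every submodule of $T$ is an ideal of $R$, which restricts how minimal generators and socle elements can interact). Alternatively one can follow Gunston's inductive approach: induct on $\ell(T)$ with base case $T = 0$ (the Cohen--Macaulay case handled in step two), and at each step use Fact~\ref{main} to transfer the accounting of generators between $\fa$ and its trace in the smaller-torsion quotient. Either route reduces the problem to repeated applications of Fact~\ref{main}, which is precisely the technical hammer available.
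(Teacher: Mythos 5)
The paper offers no proof of this Fact at all --- it is quoted directly from Gunston's thesis \cite{dil} --- so there is no in-house argument to measure you against; I can only judge the proposal on its merits. Your reduction is the right one: at $d=1$ the definition collapses to $\D_h(R)=\D(\Ext^1_A(R,A))+\e(R)$, local duality over the one-dimensional Gorenstein ring $A$ identifies $\Ext^1_A(R,A)$ with $T^{\vee}$ for $T:=\HH^0_{\fm}(R)$, and the two facts you then need, namely $\D(R)=\e(R)+\D(T)$ and $\D(T)=\D(T^{\vee})$, are precisely results the paper itself invokes elsewhere as citations (\cite[Proposition 2.4]{ii} and \cite[Corollary 4.2.3]{dil}, both used in Section 4). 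Your argument for the first of these is essentially workable. The upper bound via $0\to\fa\cap T\to\fa\to\bar{\fa}\to 0$ is fine, though note that bounding $\D(\bar{R})$ by $\rr(\bar{R})$ and then by $\e(R)$ needs a principal reduction of $\bar{\fm}$, hence an infinite residue field; the statement $\mu(I)\le\e(\bar{R})$ for one-dimensional Cohen--Macaulay rings holds in general, but by a different route. The lower-bound ideal $N_0+x\fm^s$ also works, and more easily than you indicate: for $s\gg0$ Artin--Rees gives $\fm^{s+1}\cap T=0$, so $\fa=N_0\oplus x\fm^s$ for any parameter $x$, multiplication by $x$ is injective on $\fm^s$ because $(0:_Rx)\subseteq T$, and $\mu(x\fm^s)=\ell(\fm^s/\fm^{s+1})=\e(R)$; no superficiality, no identity $\fm^{s+1}=x\fm^s$, and no appeal to Fact~\ref{main} is required.

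The genuine gap is the step you yourself single out and then do not close: $\D(T)=\D(T^{\vee})$. Writing that the structure of $T$ ``should force the invariance'' is not a proof, and that structure is in fact beside the point: the invariance holds for \emph{every} finite length module, and this is exactly \cite[Corollary 4.2.3]{dil}, which the paper quotes later when treating Buchsbaum rings. It is, however, a real theorem and not a formality: since $\mu(K)=\dim_k\soc(K^{\vee})$ and Matlis duality exchanges submodules of $T$ with quotients of $T^{\vee}$, the identity amounts to $\sup\{\mu(K):K\subseteq T\}=\sup\{\dim_k\soc(T/K):K\subseteq T\}$, a max--min statement in the spirit of \cite{iw} that requires an argument; your fallback (``follow Gunston's inductive approach'') is a pointer, not a proof. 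So: correct skeleton, one load-bearing lemma left unproven. Either cite \cite[Corollary 4.2.3]{dil} explicitly for this step, as the paper does elsewhere, or prove the max--min identity above; until then the proposal does not establish the Fact.
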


\begin{lemma}\label{comp}
Adopt the above notation. Then
$\D_h(M) =\D_h(M\otimes_R\widehat{R})$.
\end{lemma}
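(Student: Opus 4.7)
The plan is to induct on $d = \dim M$. The base case $d = 0$ is essentially free: a zero-dimensional finitely generated $R$-module has finite length and is annihilated by some power of $\fm$, so the canonical map $M \to M \otimes_R \widehat{R}$ is an isomorphism that respects the lattice of submodules and their minimal numbers of generators. Hence $\D_h(M) = \D(M) = \D(M \otimes_R \widehat{R}) = \D_h(M \otimes_R \widehat{R})$.

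For the inductive step, I would fix a presentation $R = A/I$ with $A$ Gorenstein of the same Krull dimension as $R$, so that $\widehat{R} = \widehat{A}/I\widehat{A}$ with $\widehat{A}$ again Gorenstein of the same dimension as $\widehat{R}$. Computing $\D_h(M \otimes_R \widehat{R})$ via $\widehat{A}$, the task reduces to matching, term by term, the two sides of
\[
\D_h(M) = \sum_{i=0}^{d-1} \binom{d-1}{i} \D_h(\Ext^{d-i}_A(M,A)) + \e(M)
\]
with the corresponding formula for $M \otimes_R \widehat{R}$ computed over $\widehat{A}$. Three standard ingredients handle this. First, flatness of $A \to \widehat{A}$ together with finite generation of $M$ over $A$ gives the base change isomorphism
\[
\Ext^j_A(M,A) \otimes_A \widehat{A} \cong \Ext^j_{\widehat{A}}(M \otimes_R \widehat{R},\, \widehat{A}).
\]
Second, each $\Ext^{d-i}_A(M,A)$ is a finitely generated $R$-module of dimension $\leq i < d$, so the inductive hypothesis applies and yields
\[
\D_h(\Ext^{d-i}_A(M,A)) = \D_h\!\bigl(\Ext^{d-i}_A(M,A) \otimes_R \widehat{R}\bigr) = \D_h\!\bigl(\Ext^{d-i}_{\widehat{A}}(M \otimes_R \widehat{R},\, \widehat{A})\bigr).
\]
Third, multiplicity is preserved under completion, so $\e(M) = \e(M \otimes_R \widehat{R})$. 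Summing, the defining expressions for $\D_h(M)$ and $\D_h(M \otimes_R \widehat{R})$ coincide.

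The main obstacle I anticipate is purely bookkeeping: one must ensure that the Ext-modules appearing on the two sides are genuinely completions of one another, which is exactly the content of the flat base change isomorphism above, and one must invoke the stated independence of $\D_h$ from the ambient Gorenstein ring so that the $\widehat{A}$-computation is legitimately the value $\D_h(M \otimes_R \widehat{R})$. Once these identifications are in place, the induction itself is formal.
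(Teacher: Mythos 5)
Your proposal is correct and follows essentially the same route as the paper: induction on $\dim M$, the base case via completeness of finite-length modules, flat base change for $\Ext$ over the Gorenstein presentation, invariance of multiplicity under completion, and the identification of the $R$-adic with the $A$-adic completion of the $\Ext$-modules. No substantive difference from the paper's argument.
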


\begin{proof}
	The proof is by induction on $d:=\dim M$. Since   finite-length module are complete, we may assume $d>0$. This is well-known that $\e(M)=\e(\hat{M})$. Recall that $R$ is  homomorphic image of a Gorenstein ring $(A,\fm_A)$ of same dimension  as of $R$.
Let $i<d$. By induction,  $\D_h(\Ext^{d-i}_A(M,A)\otimes_R\widehat{R})=\D_h(\Ext^{d-i}_A(M,A))$. Note that $\fm_A$-adic completion of a finitely generated $R$-module
$N$ is the same as of its $\fm_R$-adic completion,
when we view $N$ as an $A$-module via the map $A\to R$.  Then, $\Ext^{d-i}_A(M,A)\otimes_R\widehat{R}\simeq\Ext^{d-i}_{\hat{A}}(\hat{M},\hat{A})$.  Therefore, $\D_h(\Ext^{d-i}_A(M,A))=\D_h(\Ext^{d-i}_{\hat{A}}(\hat{M},\hat{A}))$. By definition, $\D_h(M)=\D_h(\hat{M})$.
\end{proof}
If $R$ is not  homomorphic image of a Gorenstein ring,  define
$\D_h(M) :=\D_h(M\otimes_R\widehat{R})$.  By the above lemma, this is well-define.
By a \textit{formal} module we mean a finitely generated $\widehat{R}$-module.
Also,  a formal module $\mathcal{M}$ is  called \textit{algebraic} if there is a finitely
generated $R$-module $M$ such that $\widehat{M}\simeq \mathcal{M}$.

\begin{fact}\label{algart}
i) Any finite length $\widehat{R}$-module  $\mathcal{M}$ is algebraic.

ii) Let $M$ be an $R$-module such that $\ell_R(\widehat{M})<\infty$. Then  $\ell_R(M)=\ell_{\widehat{R}}(\widehat{M})$.
\end{fact}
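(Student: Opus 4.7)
The proof plan rests on two standard facts about $\fm$-adic completion at a Noetherian local ring: (a) the canonical surjection $R \to \widehat R$ induces an isomorphism $R/\fm^n \xrightarrow{\sim} \widehat R/\widehat\fm^n$ for every $n$; and (b) the map $R \to \widehat R$ is faithfully flat, so for any finitely generated $R$-module $N$ one has $\widehat{\fm^n N} = \fm^n\widehat N$, and the vanishing of $\fm^n\widehat N$ forces the vanishing of $\fm^n N$. I will also use that if $L$ is any $R$-module of finite length then $\ell_R(L)=\ell_{\widehat R}(L)$ (the two residue fields coincide, so any $R$-composition series is simultaneously an $\widehat R$-composition series).

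\medskip

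\textbf{Plan for (i).} Let $\mathcal{M}$ be an $\widehat R$-module of finite length. Then $\widehat\fm^n\mathcal{M}=0$ for some $n$, so $\mathcal{M}$ is a module over $\widehat R/\widehat\fm^n$. By (a), this coincides with $R/\fm^n$, so $\mathcal{M}$ inherits canonically an $R$-module structure; call this $R$-module $M$. It is finitely generated over $R/\fm^n$, hence as an $R$-module. I then compute
\[
\widehat M \;=\; M\otimes_R\widehat R \;=\; (M/\fm^n M)\otimes_R \widehat R \;=\; M\otimes_{R/\fm^n}\bigl(\widehat R/\widehat\fm^n\bigr) \;=\; M,
\]
using $\fm^n M=0$ in the second equality and (a) in the fourth. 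Thus $\widehat M\simeq \mathcal{M}$, proving that $\mathcal{M}$ is algebraic.

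\medskip

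\textbf{Plan for (ii).} Assume $\ell_R(\widehat M)<\infty$; I must show $\ell_R(M)=\ell_{\widehat R}(\widehat M)$. First, finite length forces $\fm^n\widehat M=0$ for some $n$. By faithful flatness (b), $\widehat{\fm^n M}=\fm^n\widehat M=0$ implies $\fm^n M=0$. Hence $M$ is a finitely generated $R$-module annihilated by a power of $\fm$, which gives $\ell_R(M)<\infty$. Moreover, under $\fm^n M=0$ the canonical map $M\to \widehat M$ is an isomorphism (since $\widehat M=\varprojlim M/\fm^k M = M$ once $k\ge n$), so $\ell_R(M)=\ell_R(\widehat M)$. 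The final step is just $\ell_R(\widehat M)=\ell_{\widehat R}(\widehat M)$, which follows from the residue-field identification noted above.

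\medskip

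There is no really hard step here; the proof is essentially a two-line manipulation in each part. The only subtlety worth flagging is the use of faithful flatness in (ii) to descend the annihilation condition from $\widehat M$ to $M$, which is what converts the hypothesis $\ell_R(\widehat M)<\infty$ into the much stronger statement that $M$ is itself complete. Once this is observed, both statements collapse to the identification $\widehat R/\widehat\fm^n=R/\fm^n$.
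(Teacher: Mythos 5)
Your argument is correct and complete. The paper itself offers no real proof here --- it only says the fact ``follows by an easy induction'' and leaves the details to the reader, presumably meaning an induction on length that lifts the simple module $\widehat R/\widehat\fm\simeq R/\fm$ and then climbs up through extensions. Your route is genuinely different and, to my mind, cleaner: you avoid induction altogether by observing that a finite-length module is killed by $\widehat\fm^{\,n}$ for some $n$, and then everything reduces to the single isomorphism $R/\fm^n\simeq\widehat R/\widehat\fm^{\,n}$, which simultaneously produces the algebraic model in (i) and identifies $R$-submodules with $\widehat R$-submodules for the length comparison in (ii). The one point worth making explicit is that in (ii) you invoke $\widehat M=M\otimes_R\widehat R$ and faithful-flat descent of the condition $\fm^nM=0$, which requires $M$ to be finitely generated; this is covered by the paper's standing convention that all modules are finitely generated, but you should say so, since for a non-finitely-generated $M$ the statement (and your argument) would break down. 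With that caveat noted, your two-step reduction --- descend the annihilation to $M$, conclude $M\to\widehat M$ is an isomorphism, and match composition series over the common quotient $R/\fm^n$ --- is airtight and arguably preferable to the inductive sketch the paper gestures at.
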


\begin{proof}
This follows by an easy induction and we left the routine details to the reader.
\end{proof}

\begin{corollary}\label{mc}
Let  $\mathcal{I}\vartriangleleft \widehat{R}$ be a full parameter ideal in a local ring $\widehat{R}$.
Then there is a full parameter ideal $I$ in $R$ such that $\ell(R/I)=\ell(\widehat{R}/\mathcal{I})$.
In particular, $\m(R)=\m(\widehat{R})$.
\end{corollary}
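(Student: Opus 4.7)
The plan is to lift a system of parameters generating $\mathcal{I}$ from $\widehat{R}$ back down to $R$ through the canonical isomorphism $R/\fm^{n+1}\cong \widehat{R}/\widehat{\fm}^{\,n+1}$, and then to read off the length equality from faithful flatness together with Fact \ref{algart}.

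First I would write $\mathcal{I}=(y_1,\ldots,y_d)\widehat{R}$ with $d=\dim R=\dim\widehat{R}$ and choose $n$ large enough that $\widehat{\fm}^{\,n}\subset\mathcal{I}$; this is available because $\widehat{R}/\mathcal{I}$ has finite length. For each $i$ I would then pick a lift $x_i\in R$ with $x_i-y_i\in\widehat{\fm}^{\,n+1}$ (using the surjection $R\twoheadrightarrow \widehat{R}/\widehat{\fm}^{\,n+1}$) and set $I:=(x_1,\ldots,x_d)R$. Since $\widehat{\fm}^{\,n+1}\subset \widehat{\fm}\cdot\widehat{\fm}^{\,n}\subset \widehat{\fm}\,\mathcal{I}$, the images of the $x_i$ and the $y_i$ agree in $\mathcal{I}/\widehat{\fm}\,\mathcal{I}$, so Nakayama's lemma forces $I\widehat{R}=\mathcal{I}$.

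Next I would exploit faithful flatness of $R\to\widehat{R}$: the identification $R/I\otimes_R\widehat{R}=\widehat{R}/I\widehat{R}=\widehat{R}/\mathcal{I}$ shows that $\widehat{R/I}\cong\widehat{R}/\mathcal{I}$, which is of finite length. Fact \ref{algart}(ii) then yields
\[
\ell_R(R/I)=\ell_{\widehat{R}}(\widehat{R}/\mathcal{I}).
\]
In particular $R/I$ has finite length, so $x_1,\ldots,x_d$ is a full system of parameters of $R$ and $I$ is a full parameter ideal with the prescribed length.

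For the equality $\m(R)=\m(\widehat{R})$, the inequality $\m(\widehat{R})\leq\m(R)$ is immediate: for any full parameter ideal $I\subset R$, the extension $I\widehat{R}\subset\widehat{R}$ is full parameter, and the same faithful-flatness-plus-Fact \ref{algart}(ii) argument gives $\ell(R/I)=\ell(\widehat{R}/I\widehat{R})$. The opposite inequality $\m(R)\leq\m(\widehat{R})$ is precisely the first assertion of the corollary, on taking infima. The only real content is the Nakayama step; once $I\widehat{R}=\mathcal{I}$ is verified, everything else is formal, so I do not anticipate any serious obstacle beyond the routine approximation $x_i\equiv y_i\pmod{\widehat{\fm}\,\mathcal{I}}$.
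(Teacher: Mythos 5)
Your proof is correct, but it takes a genuinely different route from the paper's. The paper invokes its Fact~\ref{algart}(i) on algebraic formal modules: it descends the whole quotient $\widehat{R}/\mathcal{I}$ to a finitely generated $R$-module $M$, lifts the surjection $\widehat{R}\to\widehat{R}/\mathcal{I}$ to a map $F\colon R\to M$ via $\Hom_{\widehat{R}}(\widehat{R},\widehat{R}/\mathcal{I})\simeq\Hom_R(R,M)\otimes\widehat{R}$, deduces from faithful flatness that $F$ is surjective, sets $I:=\ker F$, and uses the 5-lemma to identify $\widehat{I}$ with $\mathcal{I}$ before checking $\mu(I)=\mu(\mathcal{I})$. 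You instead lift the individual generators: choosing $x_i\in R$ congruent to $y_i$ modulo $\widehat{\fm}^{\,n+1}\subset\widehat{\fm}\,\mathcal{I}$ via the isomorphism $R/\fm^{n+1}\cong\widehat{R}/\widehat{\fm}^{\,n+1}$, and concluding $I\widehat{R}=\mathcal{I}$ by Nakayama. Your argument is more elementary and more self-contained --- it bypasses Fact~\ref{algart}(i) entirely (needing only part (ii) for the length count), avoids the module-descent and 5-lemma steps, and automatically produces $I$ as an ideal generated by exactly $d=\dim R$ elements, so the ``full parameter'' claim is immediate rather than requiring the separate computation of $\mu(I)$. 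What the paper's approach buys is consistency with its surrounding framework of algebraic versus formal modules, which it reuses elsewhere; but as a proof of this particular corollary your generator-lifting argument is the more direct one, and I see no gap in it.
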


\begin{proof}
	By Fact \ref{algart}(i), there is a finitely generated $R$-module $M$ such that
$\widehat{M}\simeq\widehat{R}/\mathcal{I}$. We claim that $M$ is cyclic. Indeed, we look at the following exact sequence of formal modules $0\to \mathcal{I}\to \widehat{R}\stackrel{\mathcal{F}}\to \widehat{R}/\mathcal{I} \to 0.$
Since $\Hom_{\widehat{R}}(\widehat{R},\widehat{R}/\mathcal{I})\simeq\Hom_{R}(R,M)\otimes \widehat{R}$ there is an  $F\in \Hom_{R}(R,M)$
such that $\widehat{F}=\mathcal{F}$.
 Due to faithfully flatness of completion, $\coker(F)=0$, i.e., $F$ is surjective. So $M$ is cyclic. Put $I:=\ker F\lhd R$. There is a natural map $I\otimes \widehat{R}\to \mathcal{I}$.
We apply 5-lemma to observe that   $\widehat{I}\simeq\mathcal{I}$.
By Fact \ref{algart}(ii),  $\ell(R/I)=\ell(\widehat{R}/\mathcal{I})$. Now, we show $I$ is full parameter:
$$\mu(I)=\dim _{\frac{R}{\fm}}(\frac{I}{\fm I})=\dim ((\frac{I}{\fm I})\otimes \widehat{R})=\dim _{\frac{\widehat{R}}{\widehat{\fm}}}  (\frac{\mathcal{I}}{\widehat{\fm} \mathcal{I}})=\mu(\mathcal{I}).$$This implies that $\m(R)\leq\m(\widehat{R})$. Let $I\lhd R$ be a parameter ideal. Then $\widehat{I}\lhd \widehat{R}$ is a parameter ideal. Conclude from this that $\m(R)\geq\m(\widehat{R})$. So, $\m(R)=\m(\widehat{R})$.
\end{proof}

\begin{definition}Let $R$ be any local ring.
We say $R$ is homologically exact if $\D_h(R)=\m(R)$.
\end{definition}

The following may answer Question \ref{mat2}(ii):

\begin{proposition}
Let $R$ be a 1-dimensional local ring. Then $R$ is exact if and only if $R$ is homologically exact.
\end{proposition}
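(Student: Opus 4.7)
The plan is to reduce the equivalence to two numerical equalities that are special to dimension one, namely $\rr(R)=\m(R)$ and $\D(R)=\D_h(R)$. Once both are in hand, one has
\[
R\text{ exact}\iff\D(R)=\rr(R)\iff\D_h(R)=\m(R)\iff R\text{ homologically exact},
\]
and the proposition follows.

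For $\rr(R)=\m(R)$ I would observe that, when $\dim R=1$, a parameter sequence is simply a single element $x\in\fm$ with $\dim(R/xR)=0$, equivalently $\ell(R/xR)<\infty$. Elements of $\fm$ that fail this condition contribute $+\infty$ to the infimum defining $\rr(R)$ and so never affect it, because that infimum is finite by Sally's theorem. Hence $\rr(R)$ and $\m(R)$ are infima of the same finite set of values.

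For $\D(R)=\D_h(R)$, the case where $R$ is a homomorphic image of a Gorenstein ring of the same dimension is precisely Gunston's Fact \ref{G}. In general, I would reduce to the complete case via the two invariances $\D_h(R)=\D_h(\widehat{R})$, which holds by Lemma \ref{comp} (or by the definition of $\D_h$ when $R$ is not a Gorenstein image), and $\D(R)=\D(\widehat{R})$. The bound $\D(R)\leq\D(\widehat{R})$ follows from faithful flatness: $\mu_R(\fa)=\mu_{\widehat{R}}(\fa\widehat{R})$ for every ideal $\fa\lhd R$. For the reverse inequality, given $\mathcal{I}\lhd\widehat{R}$, an Artin--Rees argument yields $\mathcal{I}\cap\widehat{\fm}^{\,n+1}\subset\widehat{\fm}\mathcal{I}$ for $n$ sufficiently large, which forces the natural map $\mathcal{I}/\widehat{\fm}\mathcal{I}\to(\mathcal{I}+\widehat{\fm}^{\,n})/\widehat{\fm}(\mathcal{I}+\widehat{\fm}^{\,n})$ to be injective. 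The target is now an $\widehat{\fm}$-primary ideal, and the descent technique used in the proof of Corollary \ref{mc}, combining Fact \ref{algart} with the five-lemma, produces an ideal $\fa\lhd R$ with $\mu(\fa)\geq\mu(\mathcal{I})$. Taking suprema gives $\D(R)\geq\D(\widehat{R})$, and then Gunston applied to $\widehat{R}$ completes the chain $\D(R)=\D(\widehat{R})=\D_h(\widehat{R})=\D_h(R)$.

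I expect the only nontrivial step to be this descent of the classical Dilworth number through completion; the remaining pieces are either direct consequences of the one-dimensional hypothesis or immediate applications of results already recorded in the paper.
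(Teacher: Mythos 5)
Your proposal is correct and follows essentially the same route as the paper: identify $\rr(R)$ with $\m(R)$ in dimension one, reduce to the complete case, and invoke Gunston's Fact~\ref{G}. The one place you go beyond the paper is the Artin--Rees descent establishing $\D(R)=\D(\widehat{R})$; the paper's ``without loss of generality $R$ is complete'' silently requires exactly this equality (it only verifies the invariance of $\m$ and $\D_h$ under completion), so that paragraph of yours fills a real, if well-known, gap rather than doing redundant work.
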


\begin{proof}Note that $\ell(R/xR)$ is finite if and only if $x$ is parameter. From this, $\m(R)=\rr(R)$.
By Corollary \ref{mc}, $\m(R)=\m(\widehat{R})$. It follows from Lemma \ref{comp} that  $\D_h(R)=\D_h(\widehat{R})$. Hence, without loss of the generality, we may assume that $R$ is complete.
By Fact \ref{G}
$\D(R)=\D_h(R)$.
It is now clear  that  $R$ is exact if and only if $R$ is homologically exact.
\end{proof}

\begin{lemma}\label{ab}
 Let $(R,\fm,k)$ be a local ring and  $M$  a module such that $\fm M=0$. Then $M$ is exact.
\end{lemma}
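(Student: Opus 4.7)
The plan is to exploit the hypothesis $\fm M=0$ to collapse $M$ to a finite-dimensional vector space over $k$, and then observe that every relevant numerical invariant of $M$ reduces to the single value $\dim_k M$. Once this is made explicit, both the classical equality $\D(M)=\rr(M)$ and its homological incarnation $\D_h(M)=\m(M)$ become tautologies.

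First I would note that since $M$ is finitely generated over $R$ and annihilated by $\fm$, the natural structure makes it a finitely generated module over $R/\fm=k$, hence a finite-dimensional $k$-vector space with $\ell_R(M)=\dim_k M$. Every $R$-submodule $N\subseteq M$ is automatically a $k$-subspace, and since $\fm N\subseteq \fm M=0$ Nakayama's lemma gives $\mu(N)=\dim_k(N/\fm N)=\dim_k N\leq\dim_k M$, with equality for $N=M$. Therefore $\D(M)=\dim_k M=\ell(M)$.

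Next I would compute the Rees and Matsumura numbers. For any $x\in\fm$ we have $xM\subseteq\fm M=0$, so $M/xM=M$ and $\ell(M/xM)=\ell(M)$; hence $\rr(M)=\ell(M)$. Since $\dim M=0$, the empty sequence is the only parameter sequence, so $\m(M)=\ell(M)$ too. Combining these yields $\D(M)=\rr(M)=\ell(M)$, the classical notion of exactness. For the homological version, the base case $d=0$ in the definition gives $\D_h(M)=\D(M)=\ell(M)=\m(M)$, so $M$ is also homologically exact.

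There is essentially no obstacle here beyond unwinding the definitions: the only point that requires a moment's care is verifying that $\mu(N)=\dim_k N$ for an $R$-submodule of $M$ (which is immediate from $\fm N=0$), and that the Matsumura/Rees invariants are insensitive to the choice of $x\in\fm$ (which is immediate from $xM=0$). Once these are in place every invariant in sight equals $\dim_k M$, and the lemma follows.
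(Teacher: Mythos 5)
Your proposal is correct and follows essentially the same route as the paper's proof: you observe that $xM=0$ for all $x\in\fm$ forces $\rr(M)=\ell(M)$, and that every submodule $N$ is a $k$-vector space with $\mu(N)=\dim_k N$, so $\D(M)=\dim_k M=\ell(M)=\rr(M)$. The extra remarks about $\m(M)$ and $\D_h(M)$ are harmless additions beyond what the lemma asks for.
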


\begin{proof}
The condition $\fm M=0$ implies that  $M/xM=M$ for any $x\in\fm$. Hence, $\ell(M)=\rr(M).$
Also, $\fm M=0$ implies that
any
submodule $N$ of $M$ is  a vector space.
It turns out that $\dim_{k}N=\ell(N)=\ell(N/\fm N)=\mu(N)$.
Thus, $\D(M)=\dim_{k}M$. So,
$\D(M)=\dim_{k}M=\ell(M)=\rr(M).$
\end{proof}

\begin{lemma}\label{III}
Let $(R,\fm)$ be a generalized Cohen-Macaulay  ring of  dimension $d>0$ and that $k$ is an infinite. The following assertions hold: \begin{enumerate}
\item[i)] One has $\m(R)\leq\e(R)+\sum_{i=0}^{d-1}(^{d-1}_i)\ell(\HH^i_{\fm}(R)).$
\item[ii)]  If  $R$ is Buchsbaum, then $\m(R)=\e(R)+\sum_{i=0}^{d-1}(^{d-1}_i)\ell(\HH^i_{\fm}(R)).$
\end{enumerate}
\end{lemma}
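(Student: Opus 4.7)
The plan rests on the classical Stückrad--Vogel (equivalently Schenzel) inequality: for a generalized Cohen--Macaulay local ring $(R,\fm)$ of dimension $d$ and any system of parameters $\underline{x}$,
\[
\ell(R/\underline{x}R)-\e(\underline{x};R)\;\le\;\sum_{i=0}^{d-1}\binom{d-1}{i}\ell(\HH^i_{\fm}(R)),\qquad(\star)
\]
with equality precisely when $\underline{x}$ is a standard system of parameters; the Buchsbaum condition is equivalent to the assertion that every system of parameters is standard, in which case $(\star)$ holds with equality for every choice of $\underline{x}$. Together with $(\star)$ I would record two elementary ingredients: (a) the monotonicity $\e(\underline{x};R)\ge \e(\fm;R)=\e(R)$, which follows from $(\underline{x})\subseteq\fm$ together with the fact that multiplicities of $\fm$-primary ideals reverse inclusions; and (b) that since $k$ is infinite and the analytic spread of $\fm$ equals $d$, the ideal $\fm$ admits a minimal reduction generated by a system of parameters $\underline{x}_0$ realising $\e(\underline{x}_0;R)=\e(\fm;R)=\e(R)$.

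For part (i), I would apply $(\star)$ to such a minimal-reduction system of parameters $\underline{x}_0$:
\[
\m(R)\;\le\;\ell(R/\underline{x}_0R)\;\le\;\e(\underline{x}_0;R)+\sum_{i=0}^{d-1}\binom{d-1}{i}\ell(\HH^i_{\fm}(R))\;=\;\e(R)+\sum_{i=0}^{d-1}\binom{d-1}{i}\ell(\HH^i_{\fm}(R)),
\]
which is the desired upper bound on $\m(R)$.

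For part (ii), the Buchsbaum hypothesis turns $(\star)$ into an equality for every system of parameters $\underline{x}$, hence
\[
\ell(R/\underline{x}R)\;=\;\e(\underline{x};R)+\sum_{i=0}^{d-1}\binom{d-1}{i}\ell(\HH^i_{\fm}(R)).
\]
Taking the infimum over all systems of parameters and invoking (a)+(b) to evaluate $\inf_{\underline{x}}\e(\underline{x};R)=\e(R)$ (the infimum being attained by any minimal reduction of $\fm$, which exists because $k$ is infinite) yields the asserted equality for $\m(R)$.

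The delicate point is correctly invoking $(\star)$: for a generalized Cohen--Macaulay ring the inequality is valid for an arbitrary system of parameters, but the \emph{equality} case characterises standardness; consequently (i) needs only the inequality, applied to a single minimal reduction of $\fm$, while (ii) bypasses the standardness subtlety because the Buchsbaum hypothesis forces every system of parameters to be standard at once. A secondary point is justifying (b), namely that the analytic spread of $\fm$ equals $\dim R$ in any Noetherian local ring, so that $k$ infinite guarantees a minimal reduction generated by $d$ elements (which is then automatically a system of parameters).
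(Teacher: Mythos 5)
Your proof is correct and takes essentially the same route as the paper's: part (i) in both cases applies the Stückrad--Vogel bound $\ell(R/\fq)-\e(\fq,R)\le\sum_{i=0}^{d-1}\binom{d-1}{i}\ell(\HH^i_{\fm}(R))$ to a parameter ideal that is a minimal reduction of $\fm$ (available since $k$ is infinite), and part (ii) in both cases combines the constancy of $\ell(R/\fq)-\e(\fq,R)$ over Buchsbaum rings with the inequality $\e(\fq,R)\ge\e(R)$. The only difference is presentational: you take an infimum over all systems of parameters, while the paper evaluates at a single $\fq_0$ realising $\m(R)$ and closes the sandwich using part (i).
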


\begin{proof}
i) For each parameter ideal $\fq$, set $I(\fq) := \ell(R/ \fq ) -\e(\fq,R)$. Since $R$ is generalized Cohen-Macaulay,
$I(R):=\sup\{I(\fq) \}$ is finite. In fact, $I(R)=\sum_{i=0}^{d-1}(^{d-1}_i)\ell(\HH^i_{\fm}(R)).$
In view of \cite[Theorem 14.14]{Mat}, $\fm$ has a reduction $\fq_0$ which is   a parameter ideal.
Due to \cite[Theorem 14.13]{Mat} $\e(R)=\e(\fq_0,R)$. Putting all of these together, we have
\[\begin{array}{ll}
\m(R)&\leq\ell(R/ \fq_0 )\\
&=I(\fq_0)+\e(\fq_0,R)\\
&=I(\fq_0)+\e(R)\\
&\leq\sup\{I(\fq) \}+\e(R)\\
&=\sum_{i=0}^{d-1}(^{d-1}_i)\ell(\HH^i_{\fm}(R))+\e(R).
\end{array}\]

ii)   Let $\fq_0$ be a parameter sequence such that $\m(R)=\ell(R/ \fq_0 )$.
 The natural surjection $R/\fq_0^n\to R/\fm^n$ shows that $\ell(R/\fq_0^n)\geq \ell(R/\fm^n)$.
Consequently, $\e(R)\leq\e(\fq_0,R)$. Also, $I(\fq_0)=\sup\{I(\fq):\fq\textit{ is parameter}\}$, because over Buchsbaum rings $I(\fq)$ is independent of $\fq$.  Putting these together,
\[\begin{array}{ll}\m(R)&=\ell(R/ \fq_0 )\\
&=I(\fq_0)+\e(\fq_0,R)\\
&\geq I(\fq_0)+\e(R)\\
&=\sup\{I(\fq) \}+\e(R)\\
&=\sum_{i=0}^{d-1}(^{d-1}_i)\ell(\HH^i_{\fm}(R))+\e(R)\\
&\stackrel{(i)}\geq\m(R).
\end{array}\]
Thus, $\m(R)=\sum_{i=0}^{d-1}(^{d-1}_i)\ell(\HH^i_{\fm}(R))+\e(R)$.
\end{proof}

The following is a higher-dimensional version of  \cite[Theorem 2.2]{ii}:

\begin{proposition}
Any  Buchsbaum ring with infinite residue field is homologically exact.
\end{proposition}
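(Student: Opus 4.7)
The plan is to reduce both sides of the identity $\D_h(R)=\m(R)$ to the common expression
$\e(R)+\sum_{i=0}^{d-1}\binom{d-1}{i}\ell(\HH^i_{\fm}(R))$ that already appears on the $\m(R)$ side thanks to Lemma~\ref{III}(ii).

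First I would pass to the completion: by Corollary~\ref{mc} we have $\m(R)=\m(\widehat{R})$, and $\D_h$ was defined in such a way that $\D_h(R)=\D_h(\widehat{R})$ (Lemma~\ref{comp}). Being Buchsbaum and having infinite residue field are preserved under completion, so without loss of generality $R$ is complete. Then by Cohen's structure theorem $R$ is a homomorphic image of a regular (hence Gorenstein) local ring $(A,\fm_A)$ with $\dim A=\dim R=d$, which puts us in the setting where the recursive definition of $\D_h(R)$ applies directly.

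Next I would invoke local duality: for $0\le i\le d-1$,
\[
\Ext^{d-i}_A(R,A)\ \cong\ \Hom_R(\HH^i_{\fm}(R),E),
\]
where $E$ is the injective hull of $k$. Since $R$ is Buchsbaum, $\HH^i_{\fm}(R)$ has finite length for $i<d$ and moreover $\fm\,\HH^i_{\fm}(R)=0$. Matlis duality preserves length and annihilation by $\fm$, so each $\Ext^{d-i}_A(R,A)$ is a finite-dimensional $k$-vector space of dimension $\ell(\HH^i_{\fm}(R))$ and is annihilated by $\fm$. Lemma~\ref{ab} (applied to these finite length modules, where $\D_h$ collapses to $\D$) then gives
\[
\D_h(\Ext^{d-i}_A(R,A))\ =\ \D(\Ext^{d-i}_A(R,A))\ =\ \ell(\Ext^{d-i}_A(R,A))\ =\ \ell(\HH^i_{\fm}(R)).
\]

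Plugging these values into the definition of $\D_h(R)$ I obtain
\[
\D_h(R)\ =\ \sum_{i=0}^{d-1}\binom{d-1}{i}\ell(\HH^i_{\fm}(R))\,+\,\e(R),
\]
which by Lemma~\ref{III}(ii) equals $\m(R)$; this is exactly the definition of homologically exact.

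The one step that needs a little care is the Matlis/local-duality translation, in particular checking that $\fm$-annihilation of $\HH^i_{\fm}(R)$ forces $\fm$-annihilation of $\Ext^{d-i}_A(R,A)$ so that Lemma~\ref{ab} applies; this is where I expect the main technical obstacle, although it is standard once one records that the Matlis dual of a $k$-vector space over $R/\fm$ is again a $k$-vector space. Everything else is bookkeeping between Lemma~\ref{III}(ii) and the recursive definition of $\D_h$.
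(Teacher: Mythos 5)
Your proposal is correct and follows essentially the same route as the paper: reduce to the complete case via Corollary~\ref{mc} and Lemma~\ref{comp}, compute $\m(R)$ by Lemma~\ref{III}(ii), and identify each $\D_h(\Ext^{d-i}_A(R,A))$ with $\ell(\HH^i_{\fm}(R))$ via local duality, the independence theorem, and Lemma~\ref{ab}. The only (harmless) difference is at the Matlis-duality step: the paper cites Gunston's result that the Dilworth number of a finite-length module is stable under Matlis duality, whereas you observe directly that the dual of an $\fm$-annihilated module is again a $k$-vector space of the same dimension, so Lemma~\ref{ab} applies on the Ext side as well --- a slightly more self-contained way to close the same gap.
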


\begin{proof}
In view of \cite[Lemma 1.13]{bus}  a ring is Buchsbaum if and only its $\fm$-adic completion is Buchsbaum. By Corollary \ref{mc} $\m(R)=\m(\widehat{R})$. We may assume that $R$ is complete. Let $A$ be a Gorenstein ring of same dimension as of $R$ that maps onto $R$.
By  Lemma \ref{III}
$\m(R)=\sum_{i=0}^{d-1}(^{d-1}_i)\ell(\HH^i_{\fm}(R))+\e(R).$ Since $R$ is Buchsbaum $\fm\HH^i_{\fm}(R)=0$ for al $i<\dim R$.
In view of Lemma \ref{ab} $\ell(\HH^i_{\fm}(R))=\mu(\HH^i_{\fm}(R))=\D(\HH^i_{\fm}(R))$. Since  $\dim (\HH^i_{\fm}(R))=0$, we have $\D(\HH^i_{\fm}(R))=\D_h(\HH^i_{\fm}(R))$.
In view of \cite[Corollary 4.2.3]{dil} Dilworth number of a finite length module is stable under taking Matlis duality.   In view of independence theorem and local duality from
Grothendieck's local cohomology modules, we have:
 \[\begin{array}{ll}
\m(R)&=\sum_{i=0}^{d-1}(^{d-1}_i)\ell(\HH^i_{\fm}(R))+\e(R)\\
&=\sum_{i=0}^{d-1}(^{d-1}_i)\D_h(\HH^i_{\fm}(R))+\e(R)\\
&=\sum_{i=0}^{d-1}(^{d-1}_i)\D_h(\HH^i_{\fm_A}(R))+\e(R)\\
&=\sum_{i=0}^{d-1}(^{d-1}_i)\D_h(\HH^i_{\fm_A}(R)^v)+\e(R)\\
&=\sum_{i=0}^{d-1}(^{d-1}_i)\D_h(\Ext^{d-i}_A(R,A))+\e(R).
\end{array}\]The last one is
$\D_h(R)$. This completes the proof.
\end{proof}
An example of homologically exact ring which is not Buchsbaum:
\begin{example}
Look at $R := \frac{\mathbb{Q}[[x, y]]}{x\fm^2}$. This is well known that
$\e(R)=\sup\{\mu(\fm^n):n\gg 0\}$. The set $\{x^iy^j:0<i+j=n\}$ is a generator for $\fm^n$. Due to $x^3=x^2y=xy^2=0$ we observe that $\fm^n=y^nR$ for all $n\geq3$. From this,
$\e(R)=1$. Also, $$
R/yR\simeq\frac{\mathbb{Q}[[x,y]]/(x^3,x^2y,xy^2)}{(y,x^3,x^2y,xy^2)/(x^3,x^2y,xy^2)}
\simeq \mathbb{Q}[[x,y]]/(y,x^3,x^2y,xy^2)
\simeq\mathbb{Q}[[x,y]]/(y,x^3).$$
As a vector-space, the last one is $\mathbb{Q}\oplus \mathbb{Q}x\oplus \mathbb{Q}x^2$. Thus, $\m(R)\leq\ell(R/yR)\leq3$. Note that $\HH^0_{\fm}(R)=xR$. This is principal. But its submodule $\fm\HH^0_{\fm}(R)=(x^2,xy)$ is not principal.
Consequently, $\D(\HH^0_{\fm}(R))>1$. Recall from \cite[Proposition 2.4]{ii} that $\D(R)=\e(R)+\D(\HH^0_{\fm}(R))$ and $\D(R)\leq\m(R)$. Putting these together: $3\leq\D(R)\leq\m(R)\leq3$.
Hence $\D(R)=\m(R)$. Therefore, $R$ is homologically exact. But $R$ is not Buchsbaum, because $\fm\HH^0_{\fm}(R)\neq 0$.
\end{example}
Rings of Dilworth number 2 are so special:

\begin{observation}(After Bass-Matlis)
Let $R$ be  a 1-dimensional reduced  ring  and
$R'$ be its normalization such that $\mu_R(R')<\infty$.  The following are equivalent:
a) $\D(R)=2$,
b) Any ring between $R$ and $R'$ is Gorenstein,
c) Any ring between $R$ and $R'$ is complete-intersection.
\end{observation}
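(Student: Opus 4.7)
My overall plan is to orchestrate everything through Bass's classical theorem: a one-dimensional Cohen--Macaulay local ring $T$ is Gorenstein if and only if $\D(T)\le 2$, i.e.\ every ideal of $T$ is two-generated. Since a reduced noetherian ring of dimension one is automatically Cohen--Macaulay, this applies to $R$. Any ring $S$ with $R\subset S\subset R'$ is a submodule of the finitely generated $R$-module $R'$, hence noetherian, integral and finite over $R$, one-dimensional, reduced and semilocal; I would test Gorenstein/complete-intersection at each localization $S_\fn$, which is again a one-dimensional CM local ring with finite normalization, and hinge all three conditions on the bound $\D(S_\fn)\le 2$.

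For (a)$\Rightarrow$(b), I take any ideal $\fa\lhd S$. Viewed as an $R$-module, $\fa$ is a finitely generated torsion-free $R$-submodule of $R'$, and since $R\subset R'\subset Q(R)$ forces $R'\otimes_RQ(R)=Q(R)$, the generic rank of $\fa$ as an $R$-module is at most one at each minimal prime of $R$. Bass's structure theorem for finitely generated torsion-free modules over a one-dimensional Gorenstein ring (each such module is a direct sum of ideals, each two-generated) then yields $\mu_R(\fa)\le 2$. Any $R$-generating set of $\fa$ is also an $S$-generating set, so $\mu_{S_\fn}(\fa_\fn)\le 2$, and Bass's theorem applied to $S_\fn$ gives that it is Gorenstein. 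For (b)$\Rightarrow$(a), I just specialize to $S=R$: $R$ is Gorenstein, so $\D(R)\le 2$; the alternative $\D(R)=1$ would force $R$ to be a DVR and hence $R=R'$, a degenerate situation implicitly excluded by the setup.

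For (b)$\Leftrightarrow$(c) I would argue via embedding dimension. If $S_\fn$ is Gorenstein, Bass gives $\D(S_\fn)\le 2$, so in particular $\mu(\fm_{S_\fn})\le 2$ and the embedding dimension of $S_\fn$ is at most two. Cohen's structure theorem on the completion writes $\widehat{S_\fn}\cong A/I$ with $A$ regular local of dimension at most two; in dimension at most one $S_\fn$ is regular, while in dimension two $A$ is a UFD and the height-one ideal $I$ is principal, so $S_\fn$ is a hypersurface. Either way $S_\fn$ is a complete intersection; the reverse (c)$\Rightarrow$(b) is automatic since complete intersection implies Gorenstein.

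The main obstacle I expect is justifying the upgrade in the (a)$\Rightarrow$(b) step: passing from the two-generator property for integral ideals of $R$ to arbitrary finitely generated $R$-submodules of $R'$. The standard route is via Bass's structure theorem for torsion-free modules over one-dimensional Gorenstein rings, which in the reduced non-domain case one applies componentwise after decomposing along the minimal primes. Verifying that the two-generator bound is preserved under localization, and that Gorenstein/complete-intersection for a semilocal ring descends to its localizations and conversely, should be routine.
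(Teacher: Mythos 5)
Your treatment of b)$\Leftrightarrow$c) is essentially the paper's: both arguments extract from the two-generator property that each localization of an intermediate ring is a one-dimensional Cohen--Macaulay local ring of embedding dimension at most two, and then conclude it is a hypersurface --- you via Cohen's structure theorem and unique factorization in the regular cover, the paper via \cite[Ex.~21.2]{Mat}; this is the same substance. The divergence is in a)$\Leftrightarrow$b): the paper simply cites Bass \cite[\S 7]{bass} for this equivalence, whereas you attempt to reprove the hard direction a)$\Rightarrow$b), and that is exactly where your argument has a gap. Bass's decomposition theorem gives $\fa\cong I_1\oplus\dots\oplus I_k$ with each $I_j$ an ideal of $R$, hence each $\mu_R(I_j)\leq 2$; but $\mu_R(\fa)=\sum_j\mu_R(I_j)$ can a priori be as large as $2k$, and the rank condition (rank at most one at every minimal prime) only forces the $I_j$ to be supported on disjoint sets of branches --- it does not by itself force $k=1$, nor force the extra summands to be principal. (Already for $\fa=R'$ over a three-branch ring the naive summand count gives a bound of $6$, not $2$.) So the step ``the structure theorem then yields $\mu_R(\fa)\leq 2$'' is precisely the nontrivial content of Bass's theorem in the reduced non-domain case, and you assert it rather than prove it --- as you yourself flag.

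The gap is repairable, and in fact the structure theorem is not needed. Since $R'$ is a finitely generated $R$-submodule of the total quotient ring and $R'/R$ is torsion, the conductor $(R:_R R')$ contains a nonzerodivisor $c$. For any ideal $\fa$ of an intermediate ring $S$, multiplication by $c$ is injective on $\fa\subseteq R'$ and carries it into $R$, so $\fa\cong c\fa$ is isomorphic to an honest ideal of $R$ and $\mu_R(\fa)=\mu_R(c\fa)\leq\D(R)=2$; a fortiori $\mu_{S_\fn}(\fa_\fn)\leq 2$ for every maximal ideal $\fn$ of $S$, and Bass's local criterion (a one-dimensional Cohen--Macaulay local ring with every ideal two-generated is Gorenstein) finishes the job. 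Two smaller points: in b)$\Rightarrow$a) you must genuinely exclude $R=R'$, since a discrete valuation ring satisfies b) and c) while $\D(R)=1$, a caveat the statement (and the paper) leaves implicit; and your appeal to ``Gorenstein $\Rightarrow\D\leq 2$'' for $S_\fn$ in b)$\Rightarrow$c) is the same Bass input that the paper re-invokes by applying the equivalence a)$\Leftrightarrow$b) to the intermediate ring, so no economy is gained or lost there.
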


\begin{proof}
The equivalence $a)\Leftrightarrow b)$ is in \cite[\S 7]{bass}.
Assume b). Let $A$ be any ring  between $R$ and $R'$.  Any ring between $A$ and $A'=R'$ is Gorenstein. By $b)\Leftrightarrow a)$ we have $\D(A)=2$.  Let $\fm$ be any maximal ideal of $A$. Then $\mu(\fm A_{\fm})\leq 2$. Reduced rings satisfy in the Serre's condition $R(0)$. If $\mu(\fm A_{\fm})\leq 1$ then $A_{\fm}$ is regular. Without loss of the generality  we may assume that $\mu(\fm A_{\fm})=2$.
Note $A_{\fm}$ is reduced. Thus, $\dim A_{\fm}\neq 0$. In particular,   $A_{\fm}$ is  Cohen-Macaulay and that $\emb(A_{\fm})=\dim A_{\fm}+1$. In view of \cite[Ex. 21.2]{Mat}, $A_{\fm}$ is complete-intersection. In any cases $c)$ follows.
Clearly, $c)$ implies b).
\end{proof}

The reduced assumption is important:
\begin{example}
Look at $R := \frac{\mathbb{Q}[[x, y]]}{(x^2, xy)}$. Then $\depth (R)=0$ and $\dim R=1$.
Note that  $\e(R)=\lim_{n\to\infty}
\frac{\ell(R/ \fm^n)(d - 1)^!}{n^{d-1}}=1.$
 One has $\rad(y)=(x,y)$. Thus $y$ is a parameter element. Also,$$ R/yR=\frac{\mathbb{Q}[[x,y]]/(x^2,xy)}{(x^2,y)/(x^2,xy)}\simeq \mathbb{Q}[[x,y]]/(x^2,y)\simeq_{vector-space}\mathbb{Q}\oplus \mathbb{Q}x.$$Thus, $\ell(R/yR)\leq2$. Hence, $1\leq\m(R)\leq2$. However, $\m(R)\neq1$ because $R$  is not regular. Therefore, $\m(R)=2.$
 On the other hand $\D(R)\leq\m(R)=2$ and that $\mu(\fm)=2$.  So, $\D(R)=2$. But, $R$ is not even Cohen-Macaulay. So, the implication $a)\Rightarrow b)$
 in the above observation is not true without the reduced assumption.
\end{example}

Recall that a system of parameters for a non-zero finitely generated
$R$-module $M$ is a sequence $\underline{x}:=x_1,\ldots,x_n$  where $n:=\dim M$   such that
$M/\underline{x}M$ is artinian. By $\m(M)$ we mean $\inf\{\ell(M/\underline{x}M):\underline{x}\textit{ is a  M-parameter sequence} \}$ and we call it  \textit{Matsumura number} of $M$.
Also, $M$ is called homologically exact if $\m(M)=\D_h(M)$.

\begin{lemma}
Let $R$ be a local ring and $M$ an module of dimension same as of $\dim R$. Then
$M$ is maximal Cohen-Macaulay if and only if  $\e(M)=\D_h(M)$.
\end{lemma}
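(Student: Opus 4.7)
The plan is to unwind the recursive definition
$$\D_h(M) \;=\; \sum_{i=0}^{d-1}\binom{d-1}{i}\D_h\bigl(\Ext^{d-i}_A(M,A)\bigr) + \e(M)$$
and to combine it with the standard local-duality criterion that, over a Gorenstein ring $A$ of dimension $d$, a finitely generated module $M$ of dimension $d$ is maximal Cohen--Macaulay if and only if $\Ext^{j}_A(M,A)=0$ for every $j\ge 1$.

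First I would reduce to the complete case. Since $\e(M)=\e(\widehat{M})$, since $\D_h(M)=\D_h(\widehat{M})$ (immediate from Lemma \ref{comp} in the Gorenstein-image case, and by definition otherwise), and since depth and dimension are preserved under completion, we may assume that $R$ is complete and hence a homomorphic image of a Gorenstein local ring $A$ with $\dim A = \dim R = \dim M = d$.

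For the direction $M$ maximal Cohen--Macaulay $\Rightarrow$ $\e(M)=\D_h(M)$: the hypothesis gives $\depth_A M = d$, so local duality over the Gorenstein ring $A$ yields $\Ext^{d-i}_A(M,A)^{\vee}\simeq \HH^{i}_{\fm_A}(M)=0$ for every $0\le i\le d-1$. Every summand in the defining recursion is therefore zero and $\D_h(M)=\e(M)$. For the converse, $\e(M)=\D_h(M)$ collapses the recursion to $\sum_{i=0}^{d-1}\binom{d-1}{i}\D_h(\Ext^{d-i}_A(M,A))=0$. Non-negativity of $\D_h$ (which I would establish simultaneously by induction on dimension) then forces each $\D_h(\Ext^{d-i}_A(M,A))$ to vanish, and I would invoke the auxiliary claim that $\D_h(N)=0$ iff $N=0$ to conclude $\Ext^{d-i}_A(M,A)=0$ for all $0\le i\le d-1$; local duality reverses to $\HH^{i}_{\fm_A}(M)=0$ for all $i<d$, giving $\depth_A M\ge d=\dim M$, i.e., $M$ is MCM.

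The technical core, and the main obstacle, is the auxiliary vanishing lemma \textit{$\D_h(N)=0$ iff $N=0$}, which I would prove by induction on $\dim N$. In the base case $\dim N=0$ one has $\D_h(N)=\D(N)\ge\mu(N)\ge 1$ whenever $N\ne 0$. In the inductive step, using independence of $\D_h$ from the choice of Gorenstein ring (cited from \cite{vas}) I would pick a Gorenstein ring of dimension $\dim N$ surjecting onto $R/\Ann(N)$ so that the recursion applies cleanly; the inequality $\D_h(N)\ge \e(N)\ge 1$ for $N\ne 0$ then finishes the step, since the Samuel multiplicity of a nonzero finitely generated module is a positive integer. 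Packaging these pieces yields the desired equivalence.
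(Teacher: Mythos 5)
Your proof is correct and follows essentially the same route as the paper: reduce to the complete case, use the recursive definition of $\D_h$ together with local duality over a Gorenstein ring $A$ of dimension $d=\dim M=\dim R$, so that $\e(M)=\D_h(M)$ is equivalent to the vanishing of all $\Ext^{d-i}_A(M,A)$ for $i<d$, i.e.\ to $M$ being maximal Cohen--Macaulay. The only difference is that you make explicit the auxiliary fact that $\D_h(N)>0$ for $N\neq 0$ (which the paper uses silently when it passes from $\sum\binom{d-1}{i}\D_h(\Ext^{d-i}_A(M,A))=0$ to the vanishing of the $\Ext$ modules), and your inductive justification of it via $\D_h(N)\geq \e(N)\geq 1$ is sound.
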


\begin{proof}
We may assume $R$ is complete. Let $d:=\dim M=\dim R$. Suppose $R$ is  homomorphic image of a Gorenstein ring $A$ of same dimension  as of $R$.
 First, assume that $\e(M)=\D_h(M)$.
 Then $\Ext^{d-i}_A(M,A) =0$ for all $i<d$.
By local duality, $\HH^i_{\fm_A} (M)=0$ for all $i<d$. By the independence theorem, $\depth_R(M)=d$. So, $M$ is maximal Cohen-Macaulay.
Conversely, assume that $M$ is maximal Cohen-Macaulay. Let $\underline{x}$ be a parameter sequence for $R$. Let $\pi:A\to R$ be the surjection. There are $y_i$ such that $\pi(y_i)=x_i$.
Note that $\rad(y_i)=\fm_A$. Hence, $\underline{y}:=\{y_i\}$ is a parameter sequence for $A$, because $\dim A=\dim R$. We view  $M$ as an $A$-module via $\pi$. By this, $\underline{y}$ is $M$-sequence, i.e.,
$M$ is maximal Cohen-Macaulay as an $A$-module. Then $\Ext^i_A(M,A)=0$ for all $i>0$. By definition,  $\e(M)=\D_h(M)$.
\end{proof}

The following is a higher-dimensional version of \cite[Theorem 3.3]{ii}:

\begin{proposition}
Let $(R,\fm,k)$ be a local Cohen-Macaulay ring with infinite residue field and with a canonical module $\omega_R$. Then
$$\m(\omega_R)=\D_h(\omega_R)=\e(\omega_R)=\m(R)=\D_h(R)=\e(R).$$
\end{proposition}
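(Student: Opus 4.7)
The plan is to show that all six quantities coincide with $\e(R)$, by splitting the equalities into three types and using the Cohen-Macaulay hypothesis throughout.

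First I would establish the homological equalities $\D_h(R)=\e(R)$ and $\D_h(\omega_R)=\e(\omega_R)$. Since $R$ is Cohen-Macaulay of dimension $d$, $R$ itself is a maximal Cohen-Macaulay module, and by definition so is $\omega_R$. Applying the previous lemma (maximal Cohen-Macaulay $\Leftrightarrow$ $\e(M)=\D_h(M)$) to $M=R$ and to $M=\omega_R$ immediately yields both equalities.

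Next I would prove $\e(R)=\e(\omega_R)$. Completing and using $\e(-)=\e(\widehat{-})$ and $\widehat{\omega_R}=\omega_{\widehat{R}}$, one reduces to the complete case, where a Gorenstein surjection $A\twoheadrightarrow R$ exists and $\omega_R=\Ext^{\dim A-d}_A(R,A)$. The key input is the additivity formula for multiplicity: if $\dim M=\dim R=d$, then
\[
\e(M)=\sum_{\fp}\ell(M_\fp)\,\e(R/\fp),
\]
where $\fp$ runs over the minimal primes $\fp$ with $\dim R/\fp=d$. For such a $\fp$, the ring $R_\fp$ is Artinian Cohen-Macaulay and localisation commutes with forming the canonical module, so $(\omega_R)_\fp\cong\omega_{R_\fp}$. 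Matlis duality over an Artinian local ring preserves length, hence $\ell_{R_\fp}((\omega_R)_\fp)=\ell(R_\fp)$. Summing over the relevant $\fp$ gives $\e(\omega_R)=\e(R)$.

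Finally I would handle the Matsumura-number equalities $\m(R)=\e(R)$ and $\m(\omega_R)=\e(\omega_R)$ simultaneously. For an arbitrary maximal Cohen-Macaulay module $M$ (take $M=R$ or $M=\omega_R$) and any system of parameters $\underline{x}$ with $\fq:=(\underline{x})$, the Cohen-Macaulay property of $M$ forces $\ell(M/\fq M)=\e(\fq,M)$. Because $\fq\subset\fm$ is $\fm$-primary, the classical monotonicity of the Hilbert-Samuel multiplicity on $\fm$-primary ideals gives $\e(\fq,M)\geq \e(\fm,M)=\e(M)$, so $\m(M)\geq \e(M)$. For the reverse inequality, the hypothesis that $k$ is infinite allows us to choose, via \cite[Theorem 14.14]{Mat}, a minimal reduction $\fq_0$ of $\fm$ which is a parameter ideal; then \cite[Theorem 14.13]{Mat} (applied to $R$ and to $\omega_R$) gives $\e(\fq_0,M)=\e(\fm,M)=\e(M)$, so $\ell(M/\fq_0M)=\e(M)$ and therefore $\m(M)\leq\e(M)$. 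Combining the two inequalities yields $\m(R)=\e(R)$ and $\m(\omega_R)=\e(\omega_R)$, and chaining together all the equalities above completes the proof.

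The main obstacle I anticipate is making sure the reduction theory and the multiplicity monotonicity $\e(\fq,M)\geq\e(\fm,M)$ are available for the module $\omega_R$ and not only for the ring $R$; this is where the maximal Cohen-Macaulay property of $\omega_R$ (so that $\ell(\omega_R/\fq\omega_R)$ is finite and equals $\e(\fq,\omega_R)$) and the fact that a minimal reduction of $\fm$ in $R$ remains a minimal reduction with respect to $\omega_R$ (using infiniteness of the residue field again) are both essential.
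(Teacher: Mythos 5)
Your proposal is correct, and it follows the paper for two of the three blocks: the equalities $\D_h(M)=\e(M)$ for $M=R,\omega_R$ come from the same maximal Cohen--Macaulay lemma, and the equalities $\m(M)=\e(M)$ use exactly the paper's two ingredients (the surjection $M/\fq^nM\twoheadrightarrow M/\fm^nM$ giving $\e(\fq,M)\geq\e(M)$, and a parameter reduction of $\fm$ via \cite[Theorems 14.13--14.14]{Mat} for the reverse bound; the paper additionally quotes Serre's positivity where you use that a system of parameters on an MCM module is a regular sequence, which amounts to the same thing). Where you genuinely diverge is the step linking the two columns. The paper proves $\m(\omega_R)=\m(R)$ directly: it first notes $\e(\omega_R)=\e(R)$ when $\omega_R$ has a rank (the generically Gorenstein case), and in general uses Ikeda's trick $\omega_R/\ux\omega_R\simeq\omega_{R/\ux R}\simeq E_{R/\ux R}(k)$ together with Matlis duality over the Artinian quotient to get $\ell(\omega_R/\ux\omega_R)=\ell(R/\ux R)$, and then must verify separately that $R$-parameter sequences and $\omega_R$-parameter sequences coincide before taking infima. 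You instead prove $\e(\omega_R)=\e(R)$ by the associativity formula $\e(M)=\sum_{\fp}\ell(M_\fp)\e(R/\fp)$ over the minimal primes (all of maximal dimension since $R$ is Cohen--Macaulay and hence unmixed), using $(\omega_R)_\fp\simeq\omega_{R_\fp}\simeq E_{R_\fp}(\kappa(\fp))$ and Matlis duality over the Artinian localizations. This is a clean alternative: it applies Matlis duality at the minimal primes rather than at the Artinian quotient, handles the non-generically-Gorenstein case uniformly, and lets you bypass most of the parameter-sequence comparison (you only need that a parameter ideal of $R$ kills $\omega_R$ down to finite length, which is automatic, and that every $\omega_R$-parameter sequence is $\omega_R$-regular, which follows from $\omega_R$ being maximal Cohen--Macaulay and faithful). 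The cost is the reliance on the associativity formula and the localization property of the canonical module, both standard. I see no gap.
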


\begin{proof}
Since $\omega_R$ is maximal Cohen-Macaulay and by  the above lemma,
$\e(\omega_R)=\D_h(\omega_R)$.
Let $\fq$ be any $\fm$-primary ideal. Recall that the  natural surjection $\frac{\omega_R}{\fq^n\omega_R}\to \frac{\omega_R}{\fm^n\omega_R}$ shows that $\e(\fq,\omega_R)\geq \e(\omega_R)\ \ (\ast)$.
 Let $\underline{x}$ be a system of parameters.   By a theorem of Serre, $\Sigma_{i>0}(-1)^{i+1}\ell(\HH^i(\underline{x},\omega_R))\geq 0$, i.e., $\e(\underline{x},\omega_R)\leq \ell(\omega_R/\underline{x}\omega_R)$. Let $\underline{x}_0$ be such that the $\inf$ in $\m(\omega_R)$ happens.
Thus, $$\m(\omega_R)=\ell(\omega_R/\underline{x}_0\omega_R)\geq\e(\underline{x}_0,\omega_R)\stackrel{(\ast)}\geq\e(\omega_R).$$Since $k$ is infinite,
 there is a reduction of $\fm$ by a parameter sequence $\underline{x}$ (see \cite[Theorem 14.14]{Mat}).
In view of \cite[Theorem 14.13]{Mat}, $\e(\omega_R)=\e(\underline{x},\omega_R)$. Since $\underline{x}$ is $\omega_R$-sequence,
$\Sigma_{i>0}(-1)^i\ell(\HH^i(\underline{x},\omega_R))= 0$, i.e.,
 $\e(\underline{x},\omega_R)=\ell(\omega_R/\underline{x}\omega_R)$. Hence  $$\e(\omega_R)=\e(\underline{x},\omega_R)=\ell(\omega_R/\underline{x}\omega_R)\geq\m(\omega_R).$$
Therefore,  $\m(\omega_R)=\D_h(\omega_R)=\e(\omega_R)$.
Similarly, $\m(R)=\D_h(R)=\e(R)$. If $\omega_R$ has a rank then its rank is one and so $\e(\omega_R)=\e(R)$. But $\omega_R$ has a rank if and only if $R$ is generically Gorenstein.
Let us handle the general case by the following trick of Ikeda. Let $\underline{x}$ be any parameter sequence.  Then  $\underline{x}$ is  both $R$-regular and $\omega_R$-regular. Also, $\frac{\omega_R}{\underline{x}\omega_R}\simeq\omega_{R/\underline{x}R}$. Recall that $\overline{R}:=R/\underline{x}R$ is zero-dimensional. In particular,  it is complete and the only  indecomposable injective module is $E_{\overline{R}}(k)$. Thus, $\omega_{\overline{R}}\simeq E_{\overline{R}}(k)$. In view of Matlis duality $$\ell(\overline{R})=\ell(\overline{R}^v)=\ell(E_{\overline{R}}(k))=\ell(\omega_{\overline{R}})=\ell(\frac{\omega_R}{\underline{x}\omega_R})\quad(+)$$

Let $\underline{y}=y_1,\ldots,y_d\subset\fm$ be any sequence of elements. If
 $\underline{y}$
is an $R$-parameter sequence,  then $\ell(R/\underline{y}R)<\infty$. From this $\ell(R/\underline{y}R\otimes\omega_R)<\infty$. Since for any
strict subset $\underline{y}'$ of  $\underline{y}$ we have $\ell(R/\underline{y}'R\otimes\omega_R)=\infty$, it is clear that $\underline{y}$
is an $\omega_R$-parameter sequence. Conversely, suppose $\underline{y}$
is an $\omega_R$-parameter sequence. Then $\underline{y}$ is an $\omega_R$-regular sequence. (If $\pd(\omega_R)<\infty$, then by  Auslander zero-divisor, $\underline{y}$ is an $R$-regular sequence. But, $\pd(\omega_R)<\infty$ if and only if $R$ is Gorenstein. Instead, we use the following trick:) Recall that
 $\frac{\omega_R}{(y_1,\ldots,y_i)\omega_R}$ is Cohen-Macaulay for all $i$.
For any $\fp\in\Ass(\frac{\omega_R}{(y_1,\ldots,y_i)\omega_R})$ we have $\dim R/ \fp=d-i$.
Note that canonical module has  support same as of the prime spectrum. Also, associated prime
ideals of a Cohen-Macaulay module is the minimal elements of its supports. From these,  $$\Ass(\frac{\omega_R}{(y_1,\ldots,y_i)\omega_R})=\Ass(\frac{R}{(y_1,\ldots,y_i)}).$$
This yields that $\dim (\frac{R}{(y_1,\ldots,y_i)})=d-i$ for all $i$.
It turns out that  $\underline{y}$
is an $R$-parameter sequence. 

Combine the previous  paragraph along with $(+)$
to conclude that $\m(\omega_R)=\m(R)$. Consequently: $$\m(\omega_R)=\D_h(\omega_R)=\e(\omega_R)=\m(R)=\D_h(R)=\e(R).$$
The proof is now complete.\end{proof}

\begin{remark}
i) Here we present a (new) proof of Serre's positivity theorem: $\Sigma_{i>0}(-1)^{i+1}\ell(\HH^i(\underline{x},R))\geq 0$.
Indeed,
recall that $I:=(x_1,\ldots,x_d)$ is a full parameter ideal. We may assume that $d>0$. The assignment $X_i\mapsto x_i + (\underline{x})^2$ gives the surjection
$P:= \frac{R}{I}[X_1, \ldots ,X_d]\twoheadrightarrow \gr_{I}(R) =: G$.
 This yields that
$$\ell(R/I)
= \lim_{n\to\infty} \frac{\ell(P_n)(d - 1)^!}{n^{d-1}}
\stackrel{(\ast)}\geq \lim_{n\to\infty}
\frac{\ell(G_n)(d - 1)^!}{n^{d-1}}=\lim_{n\to\infty}
\frac{\ell(R/I^n)(d - 1)^!}{n^{d-1}}=\e(I,R).$$
Since $\e(I,R)=\Sigma_{i\geq0}(-1)^i\ell(\HH^i(\underline{x},R))$, we see $\Sigma_{i>0}(-1)^i\ell(\HH^i(\underline{x},R))\leq 0$. So, the claim follows.

ii) Recall that $(\ast)$ in part i) is the equality if and only if $\underline{x}$ is a  regular sequence. From this we can reprove the following result of Shoutens:
Let $(R,\fm)$ be  a local ring  such that $|R/\fm|=\infty$. Then $R$ is Cohen-Macaulay
if and only if $\m(R)=\e(R)$.
\end{remark}

\section{Concluding remarks and questions  }

Are zero-dimensional complete-intersections of zero-characteristic exact?

\begin{fact}(Watanabe  et al.)
	Let $(R,\fm,k)$ be an artinian equi-characteristic local ring.
	\begin{enumerate}
		\item[i)] Any homogeneous  complete-intersection of characteristic zero and $\emb(R)=3$ is exact.
		\item[ii)] If $f_1,\ldots,f_n\in \mathbb{Q}[X_1,\ldots,X_n]$ be homogeneous  regular sequence and of degree $d_1,\ldots,d_n$ with
$d_1\leq\ldots\leq d_n$ with $d_n\geq d_1+\ldots d_{n-1}-n$. Then $R:=\frac{\mathbb{Q}[X_1,\ldots,X_n]}{(f_1,\ldots ,f_n)}$ is exact. 	
		\item[iii)]
 If $d_n\geq d_1+\ldots d_{n-1}-n+2$
the same claim as of ii)  holds for any infinite field.\item[iv)] If $f_1,\ldots,f_n\in \mathbb{Q}[X_1,\ldots,X_n]$ is a monomial complete-intersection,
then $R:=\frac{\mathbb{Q}[X_1,\ldots,X_n]}{(f_1,\ldots ,f_n)}$ is exact.
\end{enumerate}
\end{fact}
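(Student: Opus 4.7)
The plan is to handle all four parts by a single uniform reduction to the weak Lefschetz property (WLP), exactly as was done in Proposition \ref{st} and Corollary \ref{8}. In each of (i)--(iv) the relevant graded object is either $R$ itself (parts (i)--(iv) are all explicitly homogeneous) or its associated graded ring $\gr_\fm(R)$. In every case one has a standard graded artinian complete intersection, hence a Gorenstein ring with symmetric and in particular unimodal Hilbert function $\HH(R)=(h_0,\ldots,h_s)$.

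Once a weak Lefschetz element $\ell\in (\gr_\fm R)_1$ has been produced, the exactness follows by a short syllogism. By Discussion \ref{dis}(iii) applied to the unimodal case, $\ell(\gr_\fm R/\ell\gr_\fm R)=\sup_i h_i$, so Discussion \ref{dis}(iv) gives $\rr(\gr_\fm R)\le\sup_i h_i$, and Fact \ref{low}(iv) transfers this bound to $\rr(R)$. On the other side, $\gr_\fm(R)$ is standard graded and generated in degree one, so $\fm^i/\fm^{i+1}=(\gr_\fm R)_i$ and hence $\mu(\fm^i)=h_i$ for each $i$; consequently $\D(R)\ge\sup_i\mu(\fm^i)=\sup_i h_i$. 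Combining gives $\sup_i h_i\le\D(R)\le\rr(R)\le\sup_i h_i$, forcing exactness.

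With this template in place, each of the four parts reduces to producing a weak Lefschetz element, and for this one invokes a known theorem from the Lefschetz-properties literature: for (i) the Harima--Migliore--Nagel--Watanabe theorem that every codimension-three graded complete intersection in characteristic zero enjoys WLP; for (ii) Watanabe's ``large last degree'' theorem that a characteristic-zero homogeneous complete intersection satisfying $d_n\ge d_1+\cdots+d_{n-1}-n$ carries a Lefschetz element; for (iii) the sharpened bound $d_n\ge d_1+\cdots+d_{n-1}-n+2$, under which Watanabe (respectively Stanley) shows that WLP survives to any infinite ground field; and for (iv) Stanley's theorem that any characteristic-zero monomial complete intersection $k[X_1,\ldots,X_n]/(X_1^{d_1},\ldots,X_n^{d_n})$ enjoys the strong Lefschetz property, and a fortiori WLP.

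The principal obstacle is not computational but bibliographic: each part is underwritten by a nontrivial Lefschetz theorem whose proof lies well outside the scope of this note, and the real task is to locate and cite the correct reference in each case. Once these WLP inputs are recorded, the passage to exactness is the same three-line argument in every case, and no further ingredients are needed. A small additional care is warranted in part (iii), where the hypothesis must be strong enough that the Lefschetz element can be chosen among $(\gr_\fm R)_1$ over the given infinite residue field rather than after an algebraic closure; this is precisely what the ``$+2$'' slack in the degree bound buys.
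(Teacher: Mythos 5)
Your proposal is correct and follows essentially the same route as the paper: both reduce each part to the statement $\rr(R)=\sup\{\mu(\fm^i)\}$ via the weak Lefschetz property and Discussion \ref{dis}, and then discharge the four cases by citing the corresponding known Lefschetz theorems (codimension three in characteristic zero, the large-last-degree bounds, and the monomial/Stanley case). The only differences are cosmetic: the paper works with $R$ directly rather than passing through $\gr_{\fm}(R)$ (which is isomorphic to $R$ here since everything is homogeneous), and in part (ii) it notes the harmless reduction to $d_1\geq 2$.
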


\begin{proof}	i)  The ring $R$ is of the form $\frac{K[x,y,z]}{(f_1,f_2,f_3)}$ where  $f_i$ are homogeneous and of degree $d_i$. We may assume that $2\leq d_1\leq d_2\leq d_3$. By the proof of \cite[Theorem 3.48]{wat3}, $\rr(R)=\sup\{\mu(\fm^i)\}$. That is $R$ is exact.

ii) By the induction assumption we may assume  that $2\leq d_1$. By the proof of \cite[Corollary 3.54]{wat3}, $\rr(R)=\sup\{\mu(\fm^i)\}$. 

iii) This is in \cite[Remark 3.55]{wat3}.

iv) Monomial complete-intersections over fields of zero-characteristic has Lefschetz property.
By \cite[Proposition 3.6]{wat3} $\rr(R)=\sup\{\mu(\fm^i)\}$. So, $R$ is exact.
\end{proof}

It may be $\D(R)\neq\sup\{\mu(\fm^n)\}$ but $R$ is exact. Such a thing happens even over Gorenstein rings. The first example was found by Ikeda \cite[Example 4.4]{i}.
Also, Ikeda conjectured  that $\D(R)=\sup\{\mu(\fm^n)\}$ over any artinian Gorenstein ring of embedding dimension 3. There are
counter-examples in the positive characteristic, see \cite{bcan}.
Let $(R,\fm)$ be a zero-dimensional Gorenstein local ring and of embedding dimension $3$ over a field of zero-characteristic. Is $\D(R)=\sup\{\mu(\fm^n)\}$?

\begin{remark}
i) 	Let $\mathbb{Q}\subset R$ be an artinian Gorenstein local ring such that $\mu(\fm)=\mu(\fm^2)=3$ and that  $\fm^4=0$. Then $R$ is exact.	In fact $\D(R)=\sup\{\mu(\fm^n)\}$.
Indeed, we may assume that $\fm^3$ is not zero. Also, $\mu(\fm^3)=1$. In particular,
 $\max\{\mu(\fm^i) \}=3$. By the proof of Corollary \ref{8} we have $\D(R)=\rr(R)=3$.

ii) The assumption $\mathbb{Q}\subset R$ in part i) is important.
	Indeed, let  $R:=\frac{\mathbb{F}_2[X,Y,Z]}{(X^2,Y^2,Z^2)}$.  Clearly, $R$ is an artinian Gorenstein local ring such that $\mu(\fm)=\mu(\fm^2)=3$ and that  $\fm^4=0$. We recall from \S 2 that $R$ is not exact.
 \end{remark}

By $\hdeg(-)$ we mean the homological degree. 
When is
$\m(R)\leq\hdeg(R)$?

\begin{remark}Let $(R,\fm,k)$ be an analytically irreducible local ring with infinite residue field. Then
$\m(R)\leq\hdeg(R)$.
\end{remark}

\begin{proof}If $R$ is of equi-characteristic the claim is in \cite{sch}. So, we may assume that $\Char R=0\neq p=\Char k$. We apply the same argument as of \cite{sch}.
Since both of $\m(R)$ and $\hdeg(R)$ behave nicely with respect to the completion, we may assume that $R$ is a complete local domain.
Let  $\underline{x}:=p, y_2,\ldots,y_d$ be a parameter sequence for $R$. By Cohen's structure theorem, there is a regular local ring $(A,\fn)\subset R$
such that  $\underline{x}A=\fn$ and that $R$ is finitely generated as a module over $A$. In fact, $A=V[[y_2,\ldots,y_d]]$ where $V$ is the coefficient ring of $R$ which is a discreet  valuation ring.  It follows
from  \cite[Proposition 4.1]{vas} that $\mu_A(R)\leq\hdeg(R)$. In sum, $\m(R)\leq \ell(R/\underline{x}R)=\ell(R/ \fn R)=\mu_A(R)\leq\hdeg(R),$ as claimed.
\end{proof}

\begin{acknowledgement}
I thank
Junzo Watanabe for his valuable comments on the rough version of this draft.

\end{acknowledgement}
%%%%%%%%%%%%%%%%%%%%%%%%%%%%%%%%%%%%%%%%%


\begin{thebibliography}{99}


\bibitem{bass}
H. Bass, \emph{On the ubiquity of Gorenstein rings}, Math. Z. \textbf{82} (1963) 8-28.

\bibitem{ca} G.
Casnati, \emph{
Isomorphism types of Artinian Gorenstein local algebras of multiplicity at most 9},
Comm. Algebra \textbf{38} (2010),   2738-2761.



\bibitem{ev}
J. Elias, G. Valla, \emph{
Structure theorems for certain Gorenstein ideals},
Michigan Math. J. \textbf{57} (2008), 269-–292.

\bibitem{ga}
V. Gasharov, I. Peeva, \emph{Boundedness versus periodicity over commutative local rings}, Trans.
Amer. Math. Soc. \textbf{320} (1990), 569-–580.




\bibitem{sga2}
A. Grothendieck,
\emph{Cohomologie locale des faisceaux coh\'{e}rents et th\'{e}orèmes de Lefschetz locaux et globaux} (SGA 2).
S\'{e}minaire de G\'{e}om\'{e}trie Alg\'{e}brique du Bois Marie  1962. Amsterdam: North Holland Pub. Co. (1968).

\bibitem{dil}
T.K.
Gunston, \emph{Cohomological degrees, Dilworth numbers and linear resolution}, Thesis (Ph.D.) Rutgers, Available at arXiv:1008.3711.
\bibitem{wja}T.
	Harima, J.  Migliore, U.  Nagel, J. Watanabe,
\emph{The weak and strong Lefschetz properties for Artinian K-algebras},
J. Algebra \textbf{262} (2003), no. 1, 99--126.

\bibitem{wat3}T.
Harima, T. Maeno, H. Morita, Y. Numata, A.  Wachi,J. Watanabe,
\emph{The Lefschetz properties},
Lecture Notes in Mathematics, \textbf{2080} Springer, Heidelberg, (2013).


\bibitem{iw}
H. Ikeda, J. Watanabe, \emph{The Dilworth lattice of Artinian rings}, J. Commut. Algebra \textbf{1} (2009),  315–-326.

\bibitem{i}
H. Ikeda, \emph{Results on Dilworth and Rees numbers of Artinian local rings}, Japan J. Math. (N.S.) \textbf{22} (1996),  147–-158.

\bibitem{ii}
H. Ikeda, \emph{The Dilworth number and Rees number of one-dimensional modules}, Comm. Algebra \textbf{25} (1997), no. 8, 2627–-2633.



 \bibitem{M}
H. Matsumura, \emph{Some new results on numbers of generators of ideals in local rings},  Topics in algebra, Part 2 (Warsaw, 1988), 157-–161,
Banach Center Publ., 26, Part 2, PWN, Warsaw, (1990).

\bibitem{Mat}
H. Matsumura, \emph{Commutative ring theory}, Cambridge Studies in Advanced Math, \textbf{8}, (1986).

\bibitem{Mig}
J. Migliore, F. Zanello, \emph{The strength of the weak Lefschetz property}, Illinois J. Math.\textbf{ 52} (2008), no. 4, 1417--1433.

\bibitem{sal}J.D.
Sally,
\emph{Numbers of generators of ideals in local rings}, Marcel Dekker, Inc., New York-Basel, 1978.

\bibitem{sal2}J.D.
Sally,
\emph{Stretched Gorenstein rings}, J. London Math. Soc. \textbf{20} (1979), no. 2, 19–-26.

\bibitem{sch}
H. Schoutens, \emph{Absolute bounds on the number of generators of Cohen-Macaulay ideals of height two},
Bull. Belg. Math. Soc. Simon Stevin
Volume \textbf{13}, Number 4 (2006), 719--732.

\bibitem{trung}
N.V. Trung, \emph{Bounds for the minimum number of generators of generalized Cohen-
Macaulay ideals}, J. Algebra \textbf{90} (1984), 1-–9.

\bibitem{bus}J.
St\"{u}ckrad, and W. Vogel,  \emph{Buchsbaum rings and applications. An interaction between algebra, geometry, and topology}, Mathematische Monographien {\bf{21}}, VEB Deutscher Verlag der Wissenschaften, Berlin, 1986.

\bibitem{wat}
J. Watanabe, \emph{The Dilworth number of Artinian local rings and finite posets with
rank function}, Proceedings of Conference on Commutative Algebra and Combinatorics,
Kyoto 1985, Kinokuniya Co. and North-Holland Publ. Co.

\bibitem{wat2}
J. Watanabe, \emph{The Dilworth number of Artin Gorenstein rings}, Adv. Math. \textbf{76} (1989),  194–199.


\bibitem{bcan}J.S.
Okon, J.P. Vicknair,  \emph{A Gorenstein ring with larger Dilworth number than Sperner number}, Canad. Math. Bull. \textbf{43} (2000),  100-–104.

\bibitem{vas}
W. Vasconcelos, \emph{Cohomological degrees of graded modules}, Six lectures on Commutative Algebra (J. Elias, J.M. Giral, R.M. Miró-Roig, and S. Zarzuela, eds.), Progress in Math., vol. 166, Birkhäuser, 1998,  345–-390.
\end{thebibliography}
\end{document}